\newcommand{\E}{\mathbb{E}}
\newcommand{\R}{\mathbb{R}}
\newcommand{\N}{\mathbb{N}}
\newtheorem{theo}{Theorem}[section]
\newtheorem{cor}[theo]{Corollary}
\newtheorem{rem}[theo]{Remark}
\newtheorem{propo}[theo]{Proposition}
\newtheorem{lemma}[theo]{Lemma}
\newtheorem{ass}{Assumption}
\begin{document}

\title{Approximation of the invariant distribution for a class of ergodic SPDEs using an explicit tamed exponential Euler scheme}
\author{Charles-Edouard Br\'ehier}
\address{Univ Lyon, Université Claude Bernard Lyon 1, CNRS UMR 5208, Institut Camille Jordan, 43 blvd. du 11 novembre 1918, F-69622 Villeurbanne cedex, France}
\email{brehier@math.univ-lyon1.fr}

\date{}

\keywords{Stochastic partial differential equations,exponential integrators,tamed scheme,invariant distribution}
\subjclass{60H35;65C30;60H15}

\begin{abstract}
We consider the long-time behavior of an explicit tamed exponential Euler scheme applied to a class of parabolic semilinear stochastic partial differential equations driven by additive noise, under a one-sided Lipschitz continuity condition. The setting encompasses nonlinearities with polynomial growth. First, we prove that moment bounds for the numerical scheme hold, with at most polynomial dependence with respect to the time horizon. Second, we apply this result to obtain error estimates, in the weak sense, in terms of the time-step size and of the time horizon, to quantify the error to approximate averages with respect to the invariant distribution of the continuous-time process. We justify the efficiency of using the explicit tamed exponential Euler scheme to approximate the invariant distribution, since the computational cost does not suffer from the at most polynomial growth of the moment bounds. To the best of our knowledge, this is the first result in the literature concerning the approximation of the invariant distribution for SPDEs with non-globally Lipschitz coefficients using an explicit tamed scheme.
\end{abstract}

\maketitle

\section{Introduction}

In the last $25$ years, the analysis of numerical methods for Stochastic Partial Differential Equations (SPDEs) has been a very active research field. Pionnering work have focused on the so-called strong convergence of numerical schemes for equations with Lipschitz continuous nonlinearities, and in the last decade many results concerning convergence of schemes for equations with non-globally Lipschitz continuous nonlinearities, and weak convergence, have been obtained. We refer to the monograph~\cite{LPS} for a pedagogical introduction to this field of research.

In this article, we consider some semilinear parabolic SPDEs of the type
\[
dX(t)=AX(t)dt+F(X(t))dt+dW^Q(t),
\]
as written in the framework of stochastic evolution equations, see~\cite{DPZ}. Precise assumptions concerning the linear operator $A$, the nonlinearity $F$ and the $Q$-Wiener process $W^Q$ are given below (Section~\ref{sec:setting}). Under appropriate assumption, this process admits a unique invariant probability distribution $\mu_\star$, such that
\[
\E[\varphi(X(T))]\underset{T\to\infty}\to \int \varphi d\mu_\star,
\]
exponentially fast, for any initial condition $X(0)$ and any real-valued Lipschitz continuous function $\varphi$ (see Section~\ref{sec:recall}). We study the question of approximating the invariant distribution $\mu_\star$ using a numerical scheme. The main novelty of this article is to show that an explicit scheme can be used, without loss of computational efficiency, even if the nonlinearity $F$ is not globally Lipschitz continuous.

Let us review the existing literature concerning the numerical approximation of invariant distributions for parabolic semilinear SPDEs -- see also the preprint~\cite{Boyaval-Martal-Reygner} where stochastic viscous scalar conservation laws are considered, and the monograph~\cite{Hong-Wang} and references therein where some stochastic Schr\"odinger equations are studied. In~\cite{C-E:14}, parabolic semilinear SPDEs, with Lipschitz nonlinearity, driven by space-time white noise, are considered; temporal discretization is performed using a linear implicit Euler scheme, and weak error estimates which are uniform in time are obtained using a Kolmogorov equation approach. In~\cite{C-E:17}, the same framework has been considered, for full-discretization schemes (using a finite element method for spatial discretization); error estimates are obtained using a Poisson equation approach. A full-discretization scheme based on an exponential Euler scheme has been considered in~\cite{Chen-Gan-Wang:20}. A postprocessed integrator has been proposed in~\cite{C-E:16_2} in order to increase the order of convergence. For non globally Lipschitz continuous nonlinearities, the only existing result is the preprint~\cite{Cui-Hong-Sun}, where the authors use a fully implicit scheme. Note that the literature is also limited concerning the analysis of the weak error on finite time intervals when applied to SPDEs with non-globally Lipschitz nonlinearity: see~\cite{C-E:20} where a splitting scheme is applied for the Allen-Cahn equation (cubic nonlinearity), and also~\cite{CuiHong:19} and~\cite{Can-Gan-Wang}.

In this article, we consider exponential integrators to deal with the linear part. As demonstrated in~\cite{Beccari+} (see also the monograph~\cite{HutzenthalerJentzen}), using a simple explicit exponential Euler scheme (like for instance in~\cite{JentzenKloeden}) of the type
\[
X_{n+1}=e^{\Delta tA}(X_n+\Delta tF(X_n)+\Delta W_n^Q),
\]
where the nonlinearity is discretized explictly, is not appropriate due to the loss of moment bounds, which would be essential in the proof of convergence. Many recipes have been proposed to overcome this issue (we refer to~\cite{HutzenthalerJentzen} for a general analysis in the case of finite-dimensional Stochastic Differential Equations). In this article, we consider the following explicit tamed exponential Euler scheme (see Equation~\eqref{eq:scheme})
\[
X_{n+1}=e^{\Delta tA}X_{n}+(-A)^{-1}(I-e^{\Delta tA})\frac{F(X_n)}{1+\Delta t\|F(X_n)\|_{L^2}}+e^{\Delta tA}\Delta W_n^Q,
\]
where $\Delta W_n^Q$ are Wiener increments. This type of scheme has already been studied in~\cite{GyongySabanisSiska} and~\cite{Wang} for instance. In those works, a finite time horizon $T\in(0,\infty)$ is fixed, and the authors look at strong convergence. To the best of our knowledge, neither weak convergence rates nor long-time behavior have been considered for this type of explicit tamed scheme for SPDEs in the literature so far. The objective of this article is to prove that the explicit tamed scheme can be employed to approximate the invariant distribution $\mu_\star$, with precise error estimates and analysis of the computational cost.

The contribution of this article is twofold. First, moment bounds of the type
\[
\underset{0\le n\Delta t\le T}\sup~\bigl(\E[\|X_n\|_{L^\infty}^m\bigr)^{\frac1m}\le C(\|x_0\|_{L^\infty})(1+T)
\]
are obtained, for all $T\in(0,\infty)$. See Theorem~\ref{theo:moment} for a precise statement. The moment bound is uniform with respect to the time-step size $\Delta t$. It is important to note that the upper bound is not uniform with respect to the final time $T$.

Second, weak error estimates of the type
\[
\big|\E[\varphi(X_n)]-\E[\varphi(X(t_n))]\big|\le C_\alpha(\varphi,\|x_0\|_{L^\infty})(1+T^Q)\Delta t^{2\alpha},
\]
for some $Q\in\N$, where $\alpha\in(0,\overline{\alpha})$ and $\overline{\alpha}$ is a parameter which quantifies the regularity of the process. If the equation is driven by a cylindrical Wiener process ($Q=I$, space-time white noise), $\overline{\alpha}=\frac14$, whereas $\overline{\alpha}=\frac12$ if the equation is driven by trace-class noise. See Theorem~\ref{theo:error} for a precise statement. The weak error estimate is not uniform with respect to time $T$.

Even if the upper bounds show some dependence with respect to time $T$, the polynomial growth is sufficiently slow (compared with an exponential growth) not to damage the performance of the scheme when one is interested in the approximation of the invariant distribution. Since the convergence to equilibrium is exponentially fast (with respect to $T$) in the models considered here, the analysis of the computational cost (see Corollary~\ref{cor:cost}) reveals that there is no loss in the efficiency, when compared with a situation where uniform moment bounds and error estimates hold (for instance, if the nonlinearity is globally Lipschitz as in~\cite{C-E:14}, or if an implicit integrator is employed as in~\cite{Cui-Hong-Sun}). Since the tamed exponential Euler scheme is explicit, in practice its application is simpler and the cost per step is lower, thus the results of this article justify its efficiency for the approximation of the invariant distribution for SPDEs.

To the best of our knowledge, the long-time behavior of moment bounds and weak error estimates for explicit tamed Euler schemes applied to SPDEs has not been studied in the literature so far, and the results above show that these schemes are effective to numerically approximate the invariant distribution of SPDEs. Note that only upper bounds are proven, and it may happen that in fact the polynomial growth is not optimal and that uniform in time upper bounds may be proved by other techniques. Some numerical experiments have been performed and have not been sufficient to exhibit the polynomial growth (thus they are not reported here). Studying whether the polynomial growth is optimal or whether uniform bounds can be obtained is left for future works. The combination of the results of this article with the analysis of spatial discretization, and the application of (multilevel) Monte-Carlo methods is also left as an open question.

This article is organized as follows. The setting is presented and the main assumptions are stated in Section~\ref{sec:setting}. Preliminary results concerning the long-time behavior of the solution of the SPDE are given in Section~\ref{sec:recall}. The explicit tamed exponential Euler scheme is defined in Section~\ref{sec:main}, where the two main results, Theorem~\ref{theo:moment} (moment bounds) and Theorem~\ref{theo:error} (error estimate) are stated and discussed. Section~\ref{sec:proofTh1} is devoted to the proof of Theorem~\ref{theo:moment}, whereas Section~\ref{sec:proofTh2} is devoted to the proof of Theorem~\ref{theo:error}.

\section{Setting}\label{sec:setting}

For every $p\in[1,\infty]$ let $L^p=L^p(0,1)$ be the Banach space of real-valued functions with finite $L^p$ norm, and let the associated norm be denoted by $\|\cdot\|_{L^\infty}$. When $p=2$, $H=L^2$ is an Hilbert space, with inner product denoted by $\langle\cdot,\cdot\rangle$. Let $\N$ denote the set of integers and $\N_0=\N\cup\{0\}$.

The time-step size of the numerical schemes is denoted by $\Delta t$. The a priori bounds and error estimates will be stated below for $\Delta t\in(0,\Delta t_0]$, where $\Delta t_0$ is an arbitrary parameter. Once the value of the time-step size $\Delta t$ is fixed, the following notation is used: for all $n\in\N_0$, $t_n=n\Delta t$, and for all $t\in[0,\infty)$, $\ell(t)=n$ if and only if $t\in[t_n,t_{n+1})$.

The values of constants may change from line to line in the proofs, however their are assumed not to depend on quantities such as the time-step size $\Delta t$, the initial condition $x_0$.

\subsection{Linear operator}

Set $e_n=\sqrt{2}\sin(n\pi\cdot)$ and $\lambda_n=(n\pi)^2$, for all $n\in\N$. Then $\bigl(e_n\bigr)_{n\in\N}$ is a complete orthonormal system of $H$.

The unbounded self-adjoint linear operator on $H$ defined by
\[
A=-\sum_{n\in\N}\langle \cdot,e_n\rangle e_n
\]
is the realization of the Laplace operator on $(0,1)$, with homogeneous Dirichlet boundary conditions. The associated semi-group $\bigl(e^{tA}\bigr)_{t\ge 0}$, such that $e^{tA}=\sum_{n\in\N}e^{-\lambda_n t}\langle \cdot,e_n\rangle e_n$, is such that $t\mapsto e^{tA}x$ is the solution of the heat equation with homogeneous Dirichlet boundary conditions. For every $p\in[2,\infty)$, $\bigl(e^{tA}\bigr)_{t\ge 0}$ also defines a semi-group on $L^p$. For every $\alpha\in[-1,1]$, let $(-A)^{\alpha}$ be the operator defined on $H$ by
\[
(-A)^\alpha=\sum_{n\in\N}\lambda_n^\alpha \langle \cdot,e_n\rangle e_n.
\]

Let us state some inequalities which are employed in the sequel. We refer for instance to~\cite{Cerrai} and to~\cite{Triebel}.
\begin{itemize}
\item For all $x\in L^2$, one has
\begin{equation}\label{gapL2}
\|e^{tA}x\|_{L^2}\le e^{-\lambda_1 t}\|x\|_{L^2}.
\end{equation}
\item For every $\alpha\in[0,1]$, there exists $C_\alpha\in(0,\infty)$ such that for all $x\in L^2$ and $t>0$, one has
\begin{equation}\label{eq:expoAalpha}
\|(-A)^\alpha e^{tA}x\|_{L^2}\le C_\alpha \min(t,1)^{-\alpha}\|x\|_{L^2}.
\end{equation}
\item For every $\alpha\in[0,1]$, there exists $C_\alpha\in(0,\infty)$ such that for all $x\in L^2$ and $t,s\ge 0$, one has
\begin{equation}\label{eq:tempA}
\|e^{tA}x-e^{sA}x\|_{L^2}\le C_\alpha |t-s|^\alpha \|(-A)^\alpha x\|_{L^2}.
\end{equation}
\item There exists $c,C\in(0,\infty)$ such that for all $x\in L^2$ and all $t>0$, one has
\begin{equation}\label{eq:ineqLinftyL2}
\|e^{tA}x\|_{L^\infty}\le C\min(t,1)^{-\frac14}e^{-ct}\|x\|_{L^2},
\end{equation}
and for all $x\in L^1$ and all $t>0$, one has
\begin{equation}\label{eq:ineqLinftyL1}
\|e^{tA}x\|_{L^\infty}\le C\min(t,1)^{-\frac12}e^{-ct}\|x\|_{L^1}.
\end{equation}
\item There exists $c\in(0,\infty)$ such that, for all $p\in(2,\infty)$, there exists $C_p\in(0,\infty)$, such that for all $x\in L^p$ and all $t>0$, one has
\begin{equation}\label{eq:expoLp}
\|e^{tA}x\|_{L^p}\le C_pe^{-ct}\|x\|_{L^p}.
\end{equation}
\item There exists $C\in(0,\infty)$ such that for all $x\in L^\infty$ and lal $t\ge 0$ one has
\begin{equation}\label{eq:LinftyLinfty}
\|e^{tA}x\|_{L^\infty}\le C\|x\|_{L^\infty}.
\end{equation}
\item For all $\kappa\in(\frac14,1)$, there exists $C_\kappa\in(0,\infty)$ such that for all $x\in L^2$ one has
\begin{equation}\label{eq:sob}
\|x\|_{L^\infty}\le C_\kappa \|(-A)^\kappa x\|_{L^2}.
\end{equation}
\item For all $p\in[2,\infty)$, there exists $\Lambda_p\in(0,\infty)$ such that for all $x\in L^p$, such that $Ax\in L^p$, one has
\begin{equation}\label{eq:gap}
\langle Ax,x|x|^{p-2}\rangle\le -\Lambda_p\|x\|_{L^p}^p.
\end{equation}
\item For all $\alpha\in(0,\frac12)$, $\epsilon>0$, with $\alpha+\epsilon<\frac12$, there exists $C_{\alpha,\epsilon}\in(0,\infty)$ such that
\begin{equation}\label{eq:ineqproduct1}
|(-A)^{-\alpha-\epsilon}(xy)|_{L^1}\le C_{\alpha,\epsilon}|(-A)^{\alpha+\epsilon}x|_{L^2}|(-A)^{-\alpha}y|_{L^2}.
\end{equation}
\item For all $\alpha\in(0,\frac12)$, $\epsilon>0$, with $\alpha+2\epsilon<\frac12$, there exists $C_{\alpha,\epsilon}\in(0,\infty)$ such that, if $\psi:\R\to\R$ is Lipschitz continuous, 
\begin{equation}\label{eq:ineqproduct2}
|(-A)^{\alpha+\epsilon}\psi(\cdot)|_{L^2}\le C_{\alpha,\epsilon}[\psi]_{\rm Lip}|(-A)^{\alpha+2\epsilon}\cdot|_{L^2},
\end{equation}
where $[\psi]_{\rm Lip}=\underset{z_1\neq z_2}\sup~\frac{|\psi(z_2)-\psi(z_1)|}{|z_2-z_1|}$.
\end{itemize}

Note that $\Lambda_2=\lambda_1$, and for all $p\ge 2$, one has the lower bound $\Lambda_p\ge \lambda_1\frac{4(p-1)}{p^2}$. Indeed using an integration by parts argument, one has
\begin{align*}
\langle Ax,x|x|^{p-2}\rangle&=-\langle \nabla x,\nabla (x|x|^{p-2})\rangle\\
&=-(p-1)\langle \nabla x,\nabla x |x|^{p-2}\rangle\\
&=-(p-1)\||^{\frac{p-2}{2}}\nabla x\|_{L^2}^2\\
&=-\frac{4(p-1)}{p^2}\|\nabla (|x|^{\frac{p}{2}})\|_{L^2}^2\\
&\le -\lambda_1\frac{4(p-1)}{p^2}\||x|^{\frac{p}{2}}\|_{L^2}^2\\
&=-\lambda_1\frac{4(p-1)}{p^2}\|x\|_{L^p}^p.
\end{align*}

\subsection{Nonlinearity}

Let us now give the assumptions concerning the nonlinearity: $F$ is the Nemytskii operator associated with a real-valued function $f:\R\to\R$ which is assumed to be of class $\mathcal{C}^2$, with at most polynomial growth in the following sense.
\begin{ass}\label{ass:poly}
There exists a real number $q\ge 2$ such that
\begin{equation}
\underset{z\in\R}\sup~\frac{|f(z)|+|f'(z)|+|f''(z)|}{1+|x|^q}<\infty.
\end{equation}
\end{ass}

To study the long time behavior, the following one-sided Lipschitz continuity condition is enforced.
\begin{ass}\label{ass:condition}
There exists $\gamma>0$ such that, if $p\in\{2,q\}$, one has
\begin{equation}\label{eq:condition}
\langle Ay,y|y|^{p-2}\rangle+\langle F(y+z)-F(z),y|y|^{p-2}\rangle\le -\gamma\|y\|_{L^p}^p,
\end{equation}
whenever $y,z\in L^p$ are such that the left-hand side of~\eqref{eq:condition} makes sense, where $q$ is given by Assumption~\ref{ass:poly}. 
\end{ass}
Let $\lambda_F=\underset{z\in\R}\sup~f'(z)$. When $\lambda_F<\infty$, the real-valued function $f$ satisfies a standard one-sided Lipschitz continuity condition. The condition~\eqref{eq:condition} for $p=2$ is satisfied if $\lambda_F<\lambda_1$. When $\lambda_F<0$, owing to~\eqref{eq:gap}, the condition~\eqref{eq:condition} is satisfied for all $p\in[2,\infty)$.

Assumptions~\ref{ass:poly} and~\ref{ass:condition} are satisfied  for instance if $f$ is a polynomial function, such that $\lambda_F=\underset{z\in\R}\sup~f'(z)$ is finite and sufficiently small.

\subsection{Wiener process and the stochastic convolution}

Let $\bigl(\beta_j\bigr)_{j\in\N}$ be a sequence of independent standard real-valued Wiener processes, defined on a probability space $\bigl(\Omega,\mathcal{F},\mathbb{P}\bigr)$ which satisfies the usual conditions. Let the associated expectation operator be denoted by $\E[\cdot]$.

The stochastic evolution equation is driven by a $Q$-Wiener process $\bigl(W^Q(t)\bigr)_{t\ge 0}$, with covariance operator $Q$. It is assumed that $Q=\sum_{j\in\N}q_j\langle \cdot,\tilde{e}_j\rangle \tilde{e}_j$, where $q_j\in[0,\infty)$ for all $j\in\N$, and $\bigl(\tilde{e}_j\bigr)_{j\in\N}$ is a complete orthonormal system of $H$.

We refer to the monographs~\cite{Cerrai} and~\cite{DPZ} for the general theory of stochastic evolution equations and properties of their solutions.

The stochastic convolution is the process defined by
\begin{equation}\label{eq:Z}
Z(t)=\int_0^t e^{(t-s)A}dW^Q(s)=\sum_{j\in\N}\sqrt{q_j}\int_{0}^{t}e^{(t-s)A}\tilde{e}_j d\beta_j(s),
\end{equation}
for all $t\ge 0$. The stochastic convolution is the mild solution of the linear equation driven by additive noise:
\[
dZ(t)=AZ(t)dt+dW^Q(t),\quad Z(0)=0.
\]
To justify well-posedness for the stochastic convolution and establish moment bounds for the solution in the $L^\infty$ norm, the following assumption is required.
\begin{ass}\label{ass:momentZ}
The stochastic convolution $\bigl(Z(t)\bigr)_{t\ge 0}$ is well-defined and takes values in $L^\infty$. Moreover, uniform in time moment bounds are satisfied in the $L^\infty$-norm: for all $m\in\N$, 
\begin{equation}
\underset{t\ge 0}\sup~\E[\|Z(t)\|_{L^\infty}^m]<\infty.
\end{equation}
\end{ass}
When the covariance operator $Q$ and the linear operator $A$ commute, {\it i.e.} when $\tilde{e}_j=e_j$ for all $j\in\N$, it is possible to sample the $H$-valued Gaussian random variable $Z(t)$ exactly in distribution. However, in general this is not possible and a numerical approximation scheme needs to be employed. In this article, exponential Euler integrators are used. Let $\Delta t>0$ denote the time step-size, and define the sequence of random variables $\bigl(Z_n^{\Delta t}\bigr)_{n\in \N_0}$ by  
\begin{equation}\label{eq:Zn}
Z_{n+1}^{\Delta t}=e^{\Delta tA}\bigl(Z_n^{\Delta t}+\Delta W_n^Q\bigr),
\end{equation}
with $Z_0^{\Delta t}=0$, and where $\Delta W_n^Q=W^Q(t_{n+1})-W^Q(t_n)$ are the Wiener increments. For every $n\in\N_0$ and $t\in[t_n,t_{n+1}]$, set
\begin{equation}\label{eq:Ztilde}
\tilde{Z}^{\Delta t}(t)=e^{(t-t_n)A}\bigl(Z_n^{\Delta t}+W^Q(t)-W^Q(t_n)\bigr).
\end{equation}
The following discrete-time versions of~Assumption~\ref{ass:momentZ} is introduced.
\begin{ass}\label{ass:momentZtilde}
For any time-step size $\Delta t\in(0,\Delta t_0]$, the process $\bigl(Z_n^{\Delta t}\bigr)_{n\in \N_0}$ takes values in $L^\infty$. Moreover, some moment bounds which are uniform with respect to time and to the time-step size are satisfied: for all $m\in\N$,
\begin{equation}
\underset{\Delta t\in(0,\Delta t_0]}\sup~\underset{t\ge 0}\sup~\E[\|\tilde{Z}^{\Delta t}(t)\|_{L^\infty}^m]<\infty.
\end{equation}
\end{ass}

It remains to define a parameter $\overline{\alpha}$ to express the order of convergence (in the weak sense) of the numerical scheme.
\begin{ass}\label{ass:alpha}
Assume that there exists $\alpha>0$ such that
\begin{equation}
\sum_{j=1}^{\infty}q_j\|(-A)^{2\alpha-1}\tilde{e_j}\|_{L^2}^2<\infty.
\end{equation}
\end{ass}

The parameter $\overline{\alpha}$ is then defined by
\begin{equation}\label{eq:alphabar}
\overline{\alpha}=\sup~\{\alpha\in(0,\frac12];~\sum_{j=1}^{\infty}q_j\|(-A)^{2\alpha-1}\tilde{e_j}\|_{L^2}^2<\infty\}.
\end{equation}
For all $\alpha\in(0,\overline{alpha})$, one then has
\begin{equation}\label{eq:regul_alpha}
\underset{t\ge 0}\sup~\E[\|(-A)^{\alpha}Z(t)\|_{L^2}^2]+\underset{t\ge 0}\sup~\E[\|(-A)^{\alpha}\tilde{Z}(t)\|_{L^2}^2]<\infty.
\end{equation}

Assumptions~\ref{ass:momentZ},~\ref{ass:momentZtilde} and~\ref{ass:alpha} are compatible and are satisfied for many examples. For instance, they are satisfied in the case of a cylindrical Wiener process ($Q=I$), with $\overline{\alpha}=\frac14$. Owing to the inequalities~\eqref{eq:sob} and~\eqref{eq:regul_alpha}, they are also satisfied if $\overline{\alpha}>\frac14$. This includes the case of a trace-class noise ($\sum_{j\in\N}q_j<\infty$), for which $\overline{\alpha}=\frac12$.

In the sequel, to simplify notation let $Z_n=Z_n^{\Delta t}$ and $\tilde{Z}(t)=\tilde{Z}^{\Delta t}(t)$.

\section{Preliminary results}\label{sec:recall}

This section is devoted to state and prove some properties of the solution of the stochastic evolution equation
\begin{equation}\label{eq:SPDE}
dX(t)=AX(t)dt+F(X(t))dt+dW^Q(t)~,\quad X(0)=x_0.
\end{equation}
Define $Y(t)=X(t)-Z(t)$ for all $t\ge 0$. Then~\eqref{eq:SPDE} is equivalent to the evolution equation with a random time-dependent nonlinearity
\[
\frac{dY(t)}{dt}=AY(t)+F(Y(t)+Z(t))~,\quad Y(0)=x_0.
\]
Let us first recall a well-posedness result. Let Assumptions~\ref{ass:poly} and~\ref{ass:momentZ} be satisfied. Then, for any initial condition $x_0\in L^q$, there exists a unique global mild solution~\eqref{eq:SPDE}, defined by
\[
X(t)=e^{tA}x_0+\int_{0}^{t}e^{(t-s)A}F(X(s))ds+\int_0^te^{(t-s)A}dW^Q(s)~,\quad t\ge 0.
\]
To put emphasis on the role of the initial condition $x_0$, the notation $X_{x_0}(t)=X(t)$ may be used.

The process $\bigl(X_{x_0}(t)\bigr)$ takes values in $L^\infty$. If Assumption~\ref{ass:condition} is satisfied, one has the following moment bounds in the $L^\infty$ norm, uniformly in time, if $x_0\in L^\infty$.
\begin{propo}\label{propo:momentX}
Let Assumptions~\ref{ass:poly},~\ref{ass:condition} and~\ref{ass:momentZ} be satisfied.

For all $m\in\N$, there exists $C_m\in(0,\infty)$ such that for all $x_0\in L^\infty$, one has
\[
\underset{t\ge 0}\sup~\bigl(\E[\|X_{x_0}(t)\|_{L^\infty}^m]\bigr)^{\frac1m}\le C_m(1+\|x_0\|_{L^\infty}^{q}).
\]
\end{propo}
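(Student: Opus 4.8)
The plan is to use the decomposition $X_{x_0}(t)=Y(t)+Z(t)$, where $Y$ solves the random evolution equation $\frac{dY(t)}{dt}=AY(t)+F(Y(t)+Z(t))$, $Y(0)=x_0$, and to combine a dissipative energy estimate in the $L^p$ norms (for $p\in\{2,q\}$) with a bootstrap to the $L^\infty$ norm via the mild formulation and the smoothing inequality~\eqref{eq:ineqLinftyL1}. I would first record that $X_{x_0}(t)$, hence $Y(t)$, is already known to take values in $L^\infty$, so the computations below are licit.

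\emph{Dissipative estimate in $L^p$.} For $p\in\{2,q\}$ --- the exponents covered by Assumption~\ref{ass:condition} --- I would compute, after a spectral Galerkin truncation to justify the differentiation and then passing to the limit,
\[
\frac1p\frac{d}{dt}\|Y(t)\|_{L^p}^p=\langle AY(t)+F(Y(t)+Z(t))-F(Z(t)),Y(t)|Y(t)|^{p-2}\rangle+\langle F(Z(t)),Y(t)|Y(t)|^{p-2}\rangle.
\]
The first term is $\le-\gamma\|Y(t)\|_{L^p}^p$ by Assumption~\ref{ass:condition}; the second is, by H\"older's and Young's inequalities, $\le\frac{\gamma}{2}\|Y(t)\|_{L^p}^p+C\|F(Z(t))\|_{L^p}^p$, with $\|F(Z(t))\|_{L^p}^p\le C(1+\|Z(t)\|_{L^\infty}^{pq})$ by Assumption~\ref{ass:poly}. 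This gives $\frac{d}{dt}\|Y(t)\|_{L^p}^p\le-\frac{p\gamma}{2}\|Y(t)\|_{L^p}^p+C(1+\|Z(t)\|_{L^\infty}^{pq})$. A Gronwall argument yields a pathwise bound; raising it to an arbitrary power, using Jensen's inequality on the exponentially weighted time integral so that the $L^\infty$ moments of $Z$ enter linearly, taking expectations, and invoking Assumption~\ref{ass:momentZ}, I would conclude $\underset{t\ge 0}\sup~(\E[\|Y(t)\|_{L^q}^{m}])^{1/m}\le C_m(1+\|x_0\|_{L^\infty}^{q})$ for every $m\in\N$.

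\emph{Bootstrap to $L^\infty$.} From $Y(t)=e^{tA}x_0+\int_0^te^{(t-s)A}F(Y(s)+Z(s))ds$, I would apply~\eqref{eq:LinftyLinfty} to the first term and~\eqref{eq:ineqLinftyL1} to the integrand, giving
\[
\|Y(t)\|_{L^\infty}\le C\|x_0\|_{L^\infty}+C\int_0^t\min(t-s,1)^{-\frac12}e^{-c(t-s)}\|F(Y(s)+Z(s))\|_{L^1}ds.
\]
Since $\|F(Y(s)+Z(s))\|_{L^1}\le C(1+\|Y(s)\|_{L^q}^q+\|Z(s)\|_{L^\infty}^q)$ by Assumption~\ref{ass:poly}, its $L^m(\Omega)$ norm is $\le C_m(1+\|x_0\|_{L^\infty}^q)$ uniformly in $s$ by the previous step and Assumption~\ref{ass:momentZ}; Minkowski's integral inequality in $L^m(\Omega)$ and $\int_0^\infty\min(r,1)^{-1/2}e^{-cr}dr<\infty$ then give $\underset{t\ge 0}\sup~(\E[\|Y(t)\|_{L^\infty}^m])^{1/m}\le C_m(1+\|x_0\|_{L^\infty}^q)$, and the claim for $X_{x_0}=Y+Z$ follows from Assumption~\ref{ass:momentZ}.

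\emph{Main obstacle.} The delicate point will be the rigorous justification of the energy identity: for a mild solution the computation of $\frac{d}{dt}\|Y(t)\|_{L^p}^p$ is only formal, so I would regularize (spectral truncation, or using the equation for $e^{\delta A}Y$ and letting $\delta\to0$) and check that Assumption~\ref{ass:condition} survives the approximation. The rest is bookkeeping of the polynomial dependence on $\|x_0\|_{L^\infty}$ through the exponents $p$, $q$, $pq$, using that the $L^{pq}$ norm is controlled by the $L^\infty$ norm on $(0,1)$, and exploiting the integrability of the kernel $\min(r,1)^{-1/2}e^{-cr}$, which is what makes the bounds uniform in time.
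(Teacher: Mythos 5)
Your proposal is correct and follows essentially the same route as the paper: the decomposition $X_{x_0}=Y+Z$, the dissipative $L^q$ energy estimate via Assumption~\ref{ass:condition} and Young's inequality followed by Gronwall and Jensen, and the bootstrap to $L^\infty$ through the mild formulation and the smoothing inequality~\eqref{eq:ineqLinftyL1}. The only (immaterial) differences are that you bound $\|F(Z)\|_{L^p}$ by $L^\infty$ norms of $Z$ where the paper uses $L^{q^2}$ norms, and that you make explicit the regularization needed to justify the energy identity, which the paper handles implicitly.
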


\begin{proof}
Let $Y_{x_0}(t)=X^{x_0}(t)-Z(t)$. Then for all $t>0$, one has
\[
\frac{dY_{x_0}(t)}{dt}=AY_{x_0}(t)+F(Y_{x_0}(t)+Z(t)).
\]
By an energy estimate, using~\eqref{eq:condition} (see Assumption~\ref{ass:condition}) and Young's inequality one obtains
\[
\frac{1}{q}\frac{d\|Y_{x_0}(t)\|_{L^q}^{q}}{dt}\le -\gamma'\|Y_{x_0}(t)\|_{L^q}^q+C\|F(Z(t))\|_{L^q}^q,
\]
for some $\gamma'\in(0,\gamma)$ and $C\in(0,\infty)$.

Using Gronwall's lemma and the polynomial growth assumption for $F$, see Assumption~\ref{ass:poly}, for all $t\ge 0$ one has
\[
\|Y_{x_0}(t)\|_{L^q}^q\le e^{-q\gamma' t}\|x_0\|_{L^q}^q+C\int_{0}^{t}e^{-q\gamma'(t-s)}(1+\|Z(s)\|_{L^{q^2}}^{q^2})ds.
\]
Using Jensen's inequality and Assumption~\ref{ass:momentZ}, one obtains the inequality
\[
\underset{t\ge 0}\sup~\bigl(\E[\|Y_{x_0}(t)\|_{L^q}^m]\bigr)^{\frac1m}\le C_m(1+\|x_0\|_{L^q}^q),
\]
for some $C_m\in(0,\infty)$, for all $m\in\N$.

To obtain moment bounds in the $L^\infty$ norm, the argument uses the inequality~\eqref{eq:ineqLinftyL1}. First, the mild formulation yields the identity
\[
Y_{x_0}(t)=e^{tA}x_0+\int_0^t e^{(t-s)A}F(Y_{x_0}(s)+Z(s))ds,
\]
and applying~\eqref{eq:ineqLinftyL1} one has
\begin{align*}
\|Y_{x_0}(t)\|_{L^\infty}&\le \|x_0\|_{L^\infty}+C\int_0^t \frac{e^{-c(t-s)}}{(t-s)^\frac12}\|F(Y_{x_0}(s)+Z(s))\|_{L^1}ds\\
&\le \|x_0\|_{L^\infty}+C\int_0^t \frac{e^{-c(t-s)}}{(t-s)^\frac12}(1+\|Y_{x_0}(s)\|_{L^q}^q\|+Z(s))\|_{L^q}^q)ds.
\end{align*}
Applying Jensen's inequality and using the moment bounds in the $L^q$ norm proved above and Assumption~\ref{ass:momentZ} yields
\[
\underset{t\ge 0}\sup~\bigl(\E[\|Y_{x_0}(t)\|_{L^\infty}^m]\bigr)^{\frac1m}\le C_m(1+\|x_0\|_{L^\infty}^{q}),
\]
and combining again this result with Assumption~\ref{ass:momentZ} concludes the proof.
\end{proof}

\begin{rem}
The proof of Proposition~\ref{propo:momentX} is limited to the one-dimensional case, and it would be required to modify some assumptions in order to cover higher-dimensional situations. Precisely, if the interval $(0,1)$ is replaced by $(0,1)^d$ in the setting, then~\eqref{eq:ineqLinftyL1} holds with the exponent $-\frac12$ replaced by $-\frac{d}{2}$. As soon as $d\ge 2$, a non-integrable singularity appears and one can not retrieve a $L^\infty$ bound from a $L^q$ bound of the solution as in the last part of the proof of Proposition~\ref{propo:momentX}.

If $d=2$, it suffices to assume that the condition~\eqref{eq:condition} holds for some $p=\tilde{q}>q$, in order to apply a similar argument. If $d\ge 3$, it suffices to assume that the condition~\eqref{eq:condition} holds for some sufficiently large $p=\tilde{q}_d$ depending on dimension.

The analysis is presented in the case $d=1$ only in order to simplify the presentation.
\end{rem}

\begin{propo}\label{propo:ergoX}
Let Assumptions~\ref{ass:poly},~\ref{ass:condition} and~\ref{ass:momentZ} be satisfied. Then for all $x_0^1,x_0^2\in L^p$, with $p\in\{2,q\}$, one has for all $t\ge 0$,
\begin{equation}\label{eq:contractionX}
\|X_{x_0^2}(t)-X_{x_0^1}(t)\|_{L^p}\le e^{-\gamma t}\|x_0^2-x_0^1\|_{L^p}.
\end{equation}
Moreover, there exists a probability distribution $\mu_\star$, which satisfies $\int\|x\|_{L^\infty}^m d\mu_\star(x)<\infty$ for all $m\in\N$, such that for all $x_0\in L^p$ and all Lipschitz continuous functions $\varphi:L^p\to\R$, with $p\in\{2,q\}$, one has
\begin{equation}\label{eq:ergoX}
\big|\E[\varphi(X_{x_0}(t))]-\int\varphi d\mu_\star\big|\le C(\varphi)e^{-\gamma t}(1+\|x_0\|_{L^p}).
\end{equation}
Finally, $\mu_\star$ is the unique invariant probability distribution of~\eqref{eq:SPDE}.
\end{propo}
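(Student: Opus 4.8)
The plan is to establish the three claims of Proposition~\ref{propo:ergoX} in order: the contraction estimate~\eqref{eq:contractionX}, existence of $\mu_\star$ together with the convergence rate~\eqref{eq:ergoX}, and finally uniqueness. For the contraction estimate, I would fix $p\in\{2,q\}$ and set $D(t)=X_{x_0^2}(t)-X_{x_0^1}(t)$. Since the noise is additive, $D$ solves a \emph{deterministic-looking} (pathwise) equation $\frac{dD}{dt}=AD+F(X_{x_0^2})-F(X_{x_0^1})$ with $D(0)=x_0^2-x_0^1$. Performing the energy estimate $\frac1p\frac{d}{dt}\|D(t)\|_{L^p}^p=\langle AD(t),D(t)|D(t)|^{p-2}\rangle+\langle F(X_{x_0^2}(t))-F(X_{x_0^1}(t)),D(t)|D(t)|^{p-2}\rangle$ and applying Assumption~\ref{ass:condition} with $y=D(t)$ and $z=X_{x_0^1}(t)$ (so that $y+z=X_{x_0^2}(t)$) gives $\frac1p\frac{d}{dt}\|D(t)\|_{L^p}^p\le-\gamma\|D(t)\|_{L^p}^p$, and Gronwall's lemma yields~\eqref{eq:contractionX}. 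One technical point to address is that the energy identity must be justified (e.g.\ by working with a Galerkin/spectral approximation or by a regularization argument), since $X_{x_0^i}$ is only a mild solution; I would invoke the well-posedness and regularity already recalled before Proposition~\ref{propo:momentX}, or simply remark that this is standard.

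For the existence of $\mu_\star$ and the convergence estimate, I would use the contraction~\eqref{eq:contractionX} to show that the laws of $X_{x_0}(t)$ form a Cauchy family in a suitable Wasserstein-type distance. Concretely, for a fixed Lipschitz $\varphi:L^p\to\R$ and two times $t\le s$, I would write $\E[\varphi(X_{x_0}(s))]-\E[\varphi(X_{x_0}(t))]$ using the Markov property: $\E[\varphi(X_{x_0}(s))]=\E\big[\E[\varphi(X_{X_{x_0}(s-t)}(t))\mid X_{x_0}(s-t)]\big]$, hence by~\eqref{eq:contractionX} applied to the two (random) initial conditions $X_{x_0}(s-t)$ and $x_0$, $\big|\E[\varphi(X_{x_0}(s))]-\E[\varphi(X_{x_0}(t))]\big|\le[\varphi]_{\rm Lip}\,e^{-\gamma t}\,\E\big[\|X_{x_0}(s-t)-x_0\|_{L^p}\big]$. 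The expectation on the right is bounded uniformly in $s-t$ by $C(1+\|x_0\|_{L^p})$ thanks to Proposition~\ref{propo:momentX} (and the embedding $L^\infty\hookrightarrow L^p$). Therefore $\big(\E[\varphi(X_{x_0}(t))]\big)_{t\ge0}$ is Cauchy, so it converges to a limit $\ell(\varphi)$; applying the Cauchy estimate with $s\to\infty$ gives $\big|\E[\varphi(X_{x_0}(t))]-\ell(\varphi)\big|\le C(\varphi)e^{-\gamma t}(1+\|x_0\|_{L^p})$. The functional $\varphi\mapsto\ell(\varphi)$ is positive, linear, normalized; to identify it with integration against a probability measure $\mu_\star$ on $L^\infty$ I would first show tightness of the family of laws of $X_{x_0}(t)$ in $L^2$ (using the uniform $L^\infty$ moment bound of Proposition~\ref{propo:momentX} together with uniform bounds on $\|(-A)^\alpha Y_{x_0}(t)\|$ and~\eqref{eq:regul_alpha}, which give compactness via a fractional Sobolev embedding), extract a weakly convergent subsequence whose limit $\mu_\star$ then satisfies $\int\varphi\,d\mu_\star=\ell(\varphi)$ for all bounded Lipschitz $\varphi$; the uniform moment bound passes to the limit by Fatou, giving $\int\|x\|_{L^\infty}^m\,d\mu_\star<\infty$.

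For invariance and uniqueness: invariance of $\mu_\star$ follows because the convergence~\eqref{eq:ergoX} holds starting from \emph{any} $x_0$, so applying the Markov semigroup $P_h$ and using $\int P_h\varphi\,d\mu_\star=\lim_t\E_{\mu_\star}[\varphi(X(t+h))]=\int\varphi\,d\mu_\star$ (after integrating~\eqref{eq:ergoX} in $x_0$ against $\mu_\star$ and using dominated convergence with the moment bound as envelope) shows $P_h^\ast\mu_\star=\mu_\star$ for every $h\ge0$. Uniqueness is immediate from~\eqref{eq:ergoX}: if $\nu$ is any invariant distribution with finite first moment in $L^p$, then $\int\varphi\,d\nu=\int\E[\varphi(X_{x_0}(t))]\,d\nu(x_0)\to\int\varphi\,d\mu_\star$ as $t\to\infty$ for every bounded Lipschitz $\varphi$, forcing $\nu=\mu_\star$; one must check a priori that any invariant $\nu$ has the integrability needed to apply this, which again follows by a moment estimate (integrating Proposition~\ref{propo:momentX} or arguing directly that an invariant measure must be supported where the moments are finite). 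The main obstacle I expect is the functional-analytic bookkeeping around the identification of $\ell(\cdot)$ with a genuine probability measure on the infinite-dimensional space $L^\infty$ and the associated tightness/compactness argument — the contraction and the convergence rate themselves are short once the energy estimate is granted, but making the limiting measure rigorous (choice of topology, tightness in $L^2$ versus statements about $L^\infty$-moments, measurability of $\varphi$) requires care.
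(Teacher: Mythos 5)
Your argument for the contraction~\eqref{eq:contractionX} is the same as the paper's: the energy estimate for $D(t)=X_{x_0^2}(t)-X_{x_0^1}(t)$ combined with Assumption~\ref{ass:condition} (with $y=D(t)$, $z=X_{x_0^1}(t)$) and Gronwall. For the construction of $\mu_\star$, however, you take a genuinely different route. The paper uses the \emph{remote initial condition} (pullback) method: one solves the equation started at time $-T$ from $x_0$ and lets $T\to\infty$; the contraction~\eqref{eq:contractionX} makes these pulled-back solutions Cauchy in $L^1(\Omega;L^p)$ at each fixed time, so they converge to an actual random variable $X_\star(t)$ whose law is $t$-independent by stationarity of the noise increments. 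This buys you the measure $\mu_\star$ directly as the law of a limiting random variable — no tightness or compactness argument is needed, and the moment bound $\int\|x\|_{L^\infty}^m d\mu_\star<\infty$ and the estimate~\eqref{eq:ergoX} with the linear dependence $(1+\|x_0\|_{L^p})$ both fall out immediately by coupling $X_{x_0}$ with the stationary process. Your route (Cauchy-in-$t$ behaviour of $\E[\varphi(X_{x_0}(t))]$ via the Markov property, then identification of the limit functional $\ell$ with a measure via tightness) is also standard and correct, but it is heavier precisely at the step you yourself flag as the main obstacle, and it has one small quantitative wrinkle: Proposition~\ref{propo:momentX} bounds $\E\|X_{x_0}(s-t)-x_0\|_{L^p}$ by $C(1+\|x_0\|_{L^\infty}^q)$, not by $C(1+\|x_0\|_{L^p})$, so your Cauchy estimate as written yields~\eqref{eq:ergoX} with a polynomial constant and only for $x_0\in L^\infty$; to recover the stated linear dependence for $x_0\in L^p$ you should, once $\mu_\star$ is in hand, rerun the estimate by synchronously coupling $X_{x_0}$ with a copy started from an initial condition distributed as $\mu_\star$ — which is exactly what the paper's stationary-process construction gives for free. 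Your invariance and uniqueness arguments match the paper's (uniqueness is immediate from~\eqref{eq:ergoX}).
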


\begin{proof}
The construction is standard, so only a sketch of proof is provided. See for instance~\cite{DPZergo} for details.

Let $\delta X(t)=X_{x_0^2}(t)-X_{x_0^1}(t)$, then
\[
\frac{\delta X(t)}{dt}=A\delta X(t)+F(X_{x_0^2}(t))-F(X_{x_0^1}(t)),
\]
and using an energy estimate combined with~\eqref{eq:condition} yields
\[
\frac{1}{p}\frac{\|X_{x_0^2}(t)-X_{x_0^1}(t)\|_{L^p}^p}{dt}\le -\gamma\|X_{x_0^2}(t)-X_{x_0^1}(t)\|_{L^p}^p.
\]
Using Gronwall's lemma concludes the proof of~\eqref{eq:contractionX}.

Using the remote initial condition method, the contraction property~\eqref{eq:contractionX} allows to construct a stationary process $\bigl(X_\star(t)\bigr)_{t\ge 0}$, such that the distribution of $X_\star(t)$ is independent of $t$. Let $\mu_\star$ be this probability distribution. Using that construction, the property $\int \|x\|_{L^\infty}^md\mu_\star(x)<\infty$ is a consequence of the uniform moment bounds given by Proposition~\ref{propo:momentX}. Finally, the convergence to equilibrium property~\eqref{eq:ergoX} is a straightforward consequence of~\eqref{eq:contractionX} combined with the moment bound property above. The fact that the invariant probability distribution is unique is a direct consequence of~\eqref{eq:ergoX}.
\end{proof}

\section{Tamed exponential Euler scheme and main results}\label{sec:main}

In this work, we consider the explicit tamed exponential Euler scheme defined by
\begin{equation}\label{eq:scheme}
X_{n+1}=e^{\Delta tA}X_{n}+(-A)^{-1}(I-e^{\Delta tA})\frac{F(X_n)}{1+\Delta t\|F(X_n)\|_{L^2}}+e^{\Delta tA}\Delta W_n^Q,
\end{equation}
where as explained above $\Delta t\in(0,\Delta t_0]$ is the time-step size, $t_n=n\Delta t$ and $\Delta W_n^Q=W^Q(t_{n+1})-W^Q(t_n)$.

Define for all $t\ge 0$
\begin{equation}\label{eq:Xtilde}
\tilde{X}(t)=e^{tA}x_0+\int_{0}^{t}e^{(t-s)A}\frac{F(X_{\ell(s)})}{1+\Delta tM_{\ell(s)}}ds+\tilde{Z}(t)
\end{equation}
where $\tilde{Z}(t)=\tilde{Z}^{\Delta t}(t)$ is given by~\eqref{eq:Ztilde}, and $M_n=\|f(X_n)\|_{L^2}$. Note that $\tilde{X}(t_n)=X_n$ for all $n\ge 0$.

The first main result of this article states moment bounds in the $L^\infty$ norm for the auxiliary process $\bigl(\tilde{X}(t)\bigr)_{0\le t\le T}$, for all $T\in(0,\infty)$, with a dependence which is at most polynomial, uniformly with respect to the time-step size.
\begin{theo}\label{theo:moment}
Let Assumptions~\ref{ass:poly},~\ref{ass:condition} and~\ref{ass:momentZ} be satisfied. Let the process $\bigl(\tilde{X}(t)\bigr)_{t\ge 0}$ be defined by~\eqref{eq:Xtilde}.

For all $m\in\N$, there exists a polynomial function $\mathcal{P}_m:\R\to\R$ such that for all $T\in(0,\infty)$ and all $x_0\in L^\infty$, one has
\begin{equation}\label{eq:theo-moment_Xn}
\underset{\Delta t\in(0,\Delta t_0]}\sup~\underset{0\le n\le N}\sup~\bigl(\E[\|X_n\|_{L^\infty}^m]\bigr)^{\frac1m}\le \bigl(1+N\Delta t\bigr)\mathcal{P}_m(\|x_0\|_{L^\infty}).
\end{equation}
and
\begin{equation}\label{eq:theo-moment}
\underset{\Delta t\in(0,\Delta t_0]}\sup~\underset{0\le t\le T}\sup~\bigl(\E[\|\tilde{X}(t)\|_{L^\infty}^m]\bigr)^{\frac1m}\le (1+T^q)\mathcal{P}_m(\|x_0\|_{L^\infty}).
\end{equation}
\end{theo}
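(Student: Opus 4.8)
The plan is to mimic the structure of the proof of Proposition~\ref{propo:momentX}: introduce the auxiliary process $\tilde Y(t)=\tilde X(t)-\tilde Z(t)$ which removes the stochastic convolution, prove moment bounds for $\tilde Y$ first in the $L^q$ norm via an energy estimate, then upgrade to $L^\infty$ via the smoothing inequality~\eqref{eq:ineqLinftyL1}. The key difference with the continuous-time case is that $\tilde Y$ does not solve a clean PDE: one has
\[
\tilde Y(t)=e^{tA}x_0+\int_0^t e^{(t-s)A}\frac{F(X_{\ell(s)})}{1+\Delta t M_{\ell(s)}}\,ds,
\]
so the nonlinear term is evaluated at the frozen point $X_{\ell(s)}$ and is \emph{tamed}. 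I would exploit the taming to get a crude deterministic bound first: since $\bigl\|\frac{F(X_{\ell(s)})}{1+\Delta t M_{\ell(s)}}\bigr\|_{L^2}\le \frac{1}{\Delta t}$ (and more generally the tamed nonlinearity is bounded by $C/\Delta t$ in any $L^p$ because $f$ has polynomial growth and the $L^2$ norm controls things on $(0,1)$, or one argues pointwise), each step moves $X_n$ by at most an $O(1)$ amount in $L^\infty$ after applying $(-A)^{-1}(I-e^{\Delta tA})$, whose operator norm on $L^\infty$ is $O(\Delta t)$. This already would give a bound growing \emph{linearly in $n$} for a single trajectory, but not control of moments uniformly in $\Delta t$, so it is only a starting point to ensure everything is finite.

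The real argument should be the discrete analogue of the energy estimate. The difficulty is that $\frac{d}{dt}\tilde Y$ between grid points equals $A\tilde Y(t)+\frac{F(X_{\ell(s)})}{1+\Delta t M_{\ell(s)}}$, and the nonlinearity is frozen at $X_{\ell(s)}=\tilde Y(t_n)+\tilde Z(t_n)$ rather than at $\tilde Y(t)+\tilde Z(t)$. So I would write, for $t\in[t_n,t_{n+1})$,
\[
\frac1q\frac{d}{dt}\|\tilde Y(t)\|_{L^q}^q=\langle A\tilde Y(t),\tilde Y(t)|\tilde Y(t)|^{q-2}\rangle+\Bigl\langle \frac{F(X_n)}{1+\Delta t M_n},\tilde Y(t)|\tilde Y(t)|^{q-2}\Bigr\rangle,
\]
and split the second term by adding and subtracting $F(\tilde Y(t)+\tilde Z(t))$ (untamed) and $F(\tilde Z(t))$. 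The term $\langle A\tilde Y(t),\cdots\rangle+\langle F(\tilde Y(t)+\tilde Z(t))-F(\tilde Z(t)),\cdots\rangle\le-\gamma\|\tilde Y(t)\|_{L^q}^q$ by Assumption~\ref{ass:condition}. The remaining discrepancy $\langle \frac{F(X_n)}{1+\Delta t M_n}-F(\tilde Y(t)+\tilde Z(t)),\cdots\rangle$ must be controlled: it involves (i) the taming error $\frac{F(X_n)}{1+\Delta t M_n}-F(X_n)$, of size $O(\Delta t M_n\|F(X_n)\|)$, and (ii) the freezing error $F(X_n)-F(\tilde Y(t)+\tilde Z(t))$, controlled by $f'$ times $\|X_n-\tilde X(t)\|$, and $\|\tilde X(t)-X_n\|_{L^q}$ for $t\in[t_n,t_{n+1})$ is $O(\Delta t^{\text{something}})$ times powers of the current moments plus the modulus of continuity of $\tilde Z$, using~\eqref{eq:eq:tempA} and~\eqref{eq:expoAalpha}. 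I expect this to be the main obstacle: closing the estimate requires that these error terms, after Young's inequality, be absorbed into $-\gamma\|\tilde Y(t)\|_{L^q}^q$ up to a constant (independent of $\Delta t$ and $T$) plus terms involving only moments of $\tilde Z$, $Z$ and powers of $\|x_0\|_{L^\infty}$, while the frozen-point correction over one step accumulates only an additive $O(\Delta t)$ per step — hence the factor $(1+N\Delta t)$ and $(1+T^q)$ rather than an exponential.

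Concretely the steps would be: (1) record the a priori finiteness of all moments via the crude taming bound; (2) establish a discrete Gronwall inequality for $u_n:=\E[\|\tilde Y(t_n)\|_{L^q}^{qm}]$ of the form $u_{n+1}\le e^{-q\gamma'\Delta t}u_n+C\Delta t(1+\sup_k\E[\|\tilde Z(t_k)\|^{\cdots}]+\|x_0\|^{\cdots})$ up to controlling the within-step increment, using the energy estimate above integrated on $[t_n,t_{n+1}]$ together with bounds $\|\tilde X(t)-X_n\|_{L^q}\le C\Delta t^{\alpha}(1+\|X_n\|_{L^q}^{?})+\|\tilde Z(t)-\tilde Z(t_n)\|_{L^q}$; summing the geometric series gives $\sup_{0\le n\le N}\E[\|\tilde Y(t_n)\|_{L^q}^{qm}]^{1/qm}\le C(1+N\Delta t)\mathcal P(\|x_0\|_{L^\infty})$ — note the $(1+N\Delta t)$ appears because, although $e^{-q\gamma'\Delta t}<1$, the within-step freezing correction contributes an unsignedterm that the contraction does not kill, forcing linear-in-time accumulation; (3) pass from $L^q$ to $L^\infty$ moment bounds on $\tilde Y(t_n)$ exactly as in Proposition~\ref{propo:momentX}, writing the mild formula for $\tilde Y$, applying~\eqref{eq:ineqLinftyL1} and bounding $\|\frac{F(X_{\ell(s)})}{1+\Delta t M_{\ell(s)}}\|_{L^1}\le C(1+\|X_{\ell(s)}\|_{L^q}^q)$ by Assumption~\ref{ass:poly}, which yields~\eqref{eq:theo-moment_Xn}; (4) for the continuous-time version~\eqref{eq:theo-moment}, for $t\in[t_n,t_{n+1})$ write $\tilde X(t)=e^{(t-t_n)A}X_n+\int_{t_n}^t e^{(t-s)A}\frac{F(X_n)}{1+\Delta t M_n}ds+(\tilde Z(t)-e^{(t-t_n)A}\tilde Z(t_n)-\cdots)$, estimate the three pieces using~\eqref{eq:LinftyLinfty}, the $O(\Delta t)$ bound on the drift integral in $L^\infty$ (again via~\eqref{eq:ineqLinftyL1}), and Assumption~\ref{ass:momentZtilde}, so that $\E[\|\tilde X(t)\|_{L^\infty}^m]^{1/m}\le C(1+\E[\|X_n\|_{L^\infty}^m]^{1/m})$, whence~\eqref{eq:theo-moment} follows from~\eqref{eq:theo-moment_Xn} with $N\le T/\Delta t+1$ and absorbing the extra $\Delta t$-independent constants into $\mathcal P_m$; the exponent $q$ on $T$ rather than $1$ comes from tracking the powers of the $L^q$-moments through the polynomial growth of $F$ in step (2).
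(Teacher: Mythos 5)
Your overall architecture (subtract the stochastic convolution, energy estimate in $L^q$ via Assumption~\ref{ass:condition}, upgrade to $L^\infty$ via \eqref{eq:ineqLinftyL1}) matches the skeleton of the paper's argument, but the central step of your plan does not close, and the idea that makes the paper's proof work is absent. The obstruction is the taming error that you yourself identify in item (i): the discrepancy $\frac{F(X_n)}{1+\Delta t M_n}-F(X_n)=-\frac{\Delta t M_n}{1+\Delta t M_n}F(X_n)$ has $L^q$ norm of order $\Delta t\|F(X_n)\|_{L^2}\|F(X_n)\|_{L^q}\sim \Delta t\,(1+\|X_n\|_{L^\infty}^{2q})$. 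After pairing with $\tilde Y(t)|\tilde Y(t)|^{q-2}$ and applying Young's inequality, you are left with a term of order $\Delta t^{q}\|X_n\|_{L^\infty}^{2q^2}$ (plus $\epsilon\|\tilde Y(t)\|_{L^q}^q$). This is a \emph{strictly higher power} of the unknown than the dissipative term $-\gamma\|\tilde Y(t)\|_{L^q}^q$ can absorb, and it cannot be dominated by "a constant plus moments of $\tilde Z$": the only a priori control available is the crude bound $\|X_n\|_{L^\infty}\le \|x_0\|_{L^\infty}+C/\Delta t+\|Z\|_{L^\infty}$, under which $\Delta t^{q}\|X_n\|_{L^\infty}^{2q^2}\sim\Delta t^{q-2q^2}\to\infty$. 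Trying to run the Gronwall recursion on a higher moment only reproduces the problem with a yet higher power, so the hierarchy never closes. The same issue affects your freezing error (ii), which carries a factor $(1+\|X_n\|_{L^\infty}^q)$ multiplying the one-step increment. Your closing remark that these corrections "accumulate only an additive $O(\Delta t)$ per step" is exactly the assertion that needs proof, and it is false without further input.

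The missing ingredient is a localization at the $\Delta t$-dependent level $R=\Delta t^{-\kappa}$. The paper introduces the events $\Omega_{R,t_n}=\{\sup_{0\le\ell\le n}\|X_\ell\|_{L^\infty}\le R\}$ and splits $X_n=\chi_nX_n+(1-\chi_n)X_n$. On the good event, $\Delta t\|F(X_n)\|^2\le C\Delta t\,R^{2q}=C\Delta t^{1-2q\kappa}$ is $O(1)$ for $\kappa$ small, so the taming and freezing errors become genuinely harmless remainders ($r_1$, $r_2$ in the proof of Lemma~\ref{lemma1}) and the dissipativity argument yields a bound \emph{uniform in time}, as in \eqref{eq:lemma1}. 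On the bad event, one does not attempt an energy estimate at all: one combines the crude $O(1/\Delta t)$ bound with a Markov/Chebyshev estimate of the first-exceedance probabilities $\E[\chi_{\ell-1}\mathds{1}_{\|X_\ell\|_{L^\infty}>R}]\le R^{-\theta}\E[\chi_{\ell-1}\|X_\ell\|_{L^\infty}^{\theta}]$, and choosing $\theta$ large kills all negative powers of $\Delta t$; the sum over the $N\le T/\Delta t$ steps is what actually produces the polynomial factor in $T$ in \eqref{eq:lemma2} and hence in \eqref{eq:theo-moment_Xn} (not a per-step unsigned correction in the energy inequality, as you suggest). Without this two-regime decomposition, or some substitute a priori bound on $\|X_n\|_{L^\infty}$ that is polynomial in $T$ and uniform in $\Delta t$, your step (2) cannot be carried out. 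Your steps (3) and (4) (the $L^q\to L^\infty$ upgrade and the interpolation to continuous time) are consistent with what the paper does once the $L^q$ bounds are in hand.
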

The proof of Theorem~\ref{theo:moment} is postponed to Section~\ref{sec:proofTh1}. It would be possible to express the degree of the polynomial function $\mathcal{P}_m$ in terms of $m$ and $q$, but since this does not matter below this dependence is not indicated.

Since the right-hand side of~\eqref{eq:theo-moment} depends on the time $T$, the moment bounds for the discrete time process qualitatively differ from the continuous time case, where uniform in time moment bounds hold, see Proposition~\ref{propo:momentX}. 

Since uniform in time moment bounds are not proven, then the discrete-time process may not admit an invariant probability distribution. Despite the loss of uniform in time moment bounds, it is in fact possible to approximate averages $\int \varphi d\mu_\star$ with respect to the invariant distribution $\mu_\star$, as explained by the second main result of this article.
\begin{theo}\label{theo:error}
There exists a polynomial function $\mathcal{P}:\R\to\R$, $Q\in\N$, and for every $\alpha\in(0,\overline{\alpha})$ and any function $\varphi:L^2\to\R$ of class $\mathcal{C}^2$ with bounded first and second order derivatives, there exists $C_\alpha(\varphi)\in(0,\infty)$, such that for all $x_0\in L^\infty$, all $\Delta t\in(0,\Delta t_0]$ and $N\in\N$, one has
\[
|\E[\varphi(X_N)]-\int\varphi d\mu_\star|\le C_\alpha(\varphi)\Delta t^{2\alpha}\bigl(1+(N\Delta t)^{Q}\bigr)\mathcal{P}(\|x_0\|_{L^\infty})+e^{-cN\Delta t}(1+\|x_0\|_{L^\infty}).
\]
\end{theo}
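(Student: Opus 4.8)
The plan is to split the error $\bigl|\E[\varphi(X_N)]-\int\varphi\,d\mu_\star\bigr|$ into two contributions using the triangle inequality and the ergodicity estimate~\eqref{eq:ergoX}. Write
\[
\bigl|\E[\varphi(X_N)]-\int\varphi\,d\mu_\star\bigr|\le \bigl|\E[\varphi(X_N)]-\E[\varphi(X_{x_0}(t_N))]\bigr|+\bigl|\E[\varphi(X_{x_0}(t_N))]-\int\varphi\,d\mu_\star\bigr|.
\]
The second term is controlled directly by Proposition~\ref{propo:ergoX}, giving the $e^{-\gamma t_N}(1+\|x_0\|_{L^2})$ contribution (absorbing $\gamma$ into the constant $c$ and bounding $\|x_0\|_{L^2}\le\|x_0\|_{L^\infty}$). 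So the whole problem reduces to the finite-horizon weak error estimate
\[
\bigl|\E[\varphi(X_N)]-\E[\varphi(X_{x_0}(t_N))]\bigr|\le C_\alpha(\varphi)\Delta t^{2\alpha}\bigl(1+t_N^{Q}\bigr)\mathcal{P}(\|x_0\|_{L^\infty}),
\]
and this is what Section~\ref{sec:proofTh2} must establish.

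For the finite-horizon weak error I would use the standard Kolmogorov equation (backward PDE) approach. Fix $T=t_N$ and let $u(t,x)=\E[\varphi(X_x(T-t))]$, which solves the Kolmogorov equation associated with~\eqref{eq:SPDE}; one needs regularity estimates for the first and second spatial derivatives $Du$, $D^2u$ of $u$, which should follow by differentiating the flow and using the one-sided Lipschitz/contraction bound~\eqref{eq:contractionX} and the moment bounds, with the key point that the derivative bounds come with integrable-in-time singularities of order $|T-t|^{-\alpha}$ type (from applying $(-A)^\alpha e^{tA}$, inequality~\eqref{eq:expoAalpha}) and at worst polynomial-in-$\|x\|$ growth from the polynomial nonlinearity $F$. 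Then I would write the error telescopically along the grid, comparing $\E[u(t_{n+1},X_{n+1})]$ with $\E[u(t_n,X_n)]$, and on each interval $[t_n,t_{n+1}]$ expand using Itô's formula for the auxiliary process $\tilde X(t)$ from~\eqref{eq:Xtilde} and the Kolmogorov equation. The leading local error terms to estimate are: (i) the discrepancy between $F(\tilde X(t))$ and the frozen, tamed drift $\frac{F(X_{\ell(t)})}{1+\Delta t M_{\ell(t)}}$ — this splits into a "taming" error of size $O(\Delta t)\|F(X_n)\|\,\|F(X_n)\|$, hence $O(\Delta t)$ times polynomial moments, and a "time-freezing/regularity" error handled by writing $\tilde X(t)-X_n$ in mild form and using Hölder regularity in time of order $\alpha$ of the stochastic convolution via~\eqref{eq:regul_alpha} and the semigroup bound~\eqref{eq:tempA}; (ii) the contribution of the noise, where exploiting Malliavin integration by parts / the second derivative $D^2u$ against the covariance $Q$ gives the order $\Delta t^{2\alpha}$ (this is where $\overline\alpha$ enters, through Assumption~\ref{ass:alpha} and inequalities~\eqref{eq:ineqproduct1}–\eqref{eq:ineqproduct2}). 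Summing the $N$ local errors, each of size $\Delta t^{2\alpha}$ times a polynomial in $\sup_{0\le t\le T}(\E\|\tilde X(t)\|_{L^\infty}^m)^{1/m}$, and invoking Theorem~\ref{theo:moment} to bound that supremum by $(1+T^q)\mathcal{P}_m(\|x_0\|_{L^\infty})$, produces a bound of the form $N\cdot\Delta t^{2\alpha}\cdot(1+T^q)^{k}\mathcal{P}(\|x_0\|_{L^\infty})$; since $N\Delta t=T$ and $N\le T/\Delta t$, one rewrites $N\Delta t^{2\alpha}=T\Delta t^{2\alpha-1}$... which would be bad, so in fact the telescoping must be arranged so that each local error is $O(\Delta t^{1+2\alpha})$ (one extra power of $\Delta t$ from the length of the interval beyond the already-present $\Delta t^{2\alpha}$ regularity gain), making the sum $N\cdot\Delta t^{1+2\alpha}=T\Delta t^{2\alpha}$, and then collecting powers of $T$ from the moment bounds and from the time-singularities of $Du,D^2u$ into a single exponent $Q\in\N$.

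The main obstacle I anticipate is twofold and intertwined. First, one must obtain the derivative estimates for $u$ with the correct \emph{simultaneous} control: integrable singularity $|T-t|^{-\alpha}$ (or $|T-t|^{-2\alpha}$ for the second derivative, which is borderline and forces $\alpha<\overline\alpha\le\frac12$), \emph{together with} only polynomial growth in $\|x\|_{L^\infty}$ — this requires carefully propagating the polynomial-growth structure of $F,F',F''$ through the variational equation for $D X_x$, $D^2 X_x$, using the contraction~\eqref{eq:contractionX} in $L^p$, $p\in\{2,q\}$, to keep the time-growth polynomial rather than exponential. Second, the taming modification breaks the exact semigroup/mild structure, so the Itô expansion of $\tilde X$ produces cross terms mixing the tamed drift, the true drift, and the noise; bounding these so that the $T$-dependence stays polynomial (never exponential) — and in particular checking that the moment bounds of Theorem~\ref{theo:moment}, which themselves are only polynomially-in-$T$ controlled, are enough — is the delicate bookkeeping that the proof in Section~\ref{sec:proofTh2} will need to carry out. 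The final repackaging, choosing $Q$ large enough to dominate all accumulated powers of $T$ coming from (a) the moment bounds $(1+T^q)$, (b) the number of steps, and (c) the time-singularities integrated against the semigroup, is routine once those two estimates are in place.
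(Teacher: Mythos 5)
Your proposal follows essentially the same route as the paper: the triangle inequality splitting off the exponential ergodicity term via Proposition~\ref{propo:ergoX}/\eqref{eq:cvexpo}, then the Kolmogorov-equation approach for the finite-horizon weak error, telescoping along the grid, applying It\^o's formula to $u(t_N-t,\tilde X(t))$, separating the tamed-drift discrepancy from the noise-discretization term, and controlling everything with derivative estimates for $u$ that carry exponential time decay and polynomial growth in $\|x\|_{L^\infty}$ (Proposition~\ref{propo:Kolmogorov}) together with the moment bounds of Theorem~\ref{theo:moment}. The only cosmetic difference is your bookkeeping of the sum of local errors as $N\cdot O(\Delta t^{1+2\alpha})=T\Delta t^{2\alpha}$, whereas the paper absorbs the time integral into the $e^{-c(t_N-t)}$ decay of $Du$, $D^2u$ — both land on the claimed $(1+T^Q)\Delta t^{2\alpha}$ bound.
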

Recall that $\overline{\alpha}\in(0,\frac12]$ is given by~\eqref{eq:alphabar} (under Assumption~\ref{ass:alpha}), and that $\overline{\alpha}=\frac14$ if $Q=I$ ($q_j=1$ for all $j\ge 1$, space-time white noise) and $\overline{\alpha}=\frac12$ if ${\rm Tr}(Q)<\infty$ ($\sum_{j\in\N}q_j<\infty$, trace-class noise). The order of convergence for the temporal discretization error in Theorem~\ref{theo:error} thus coincides with the weak order of convergence for Euler type methods applied to the SPDE~\eqref{eq:SPDE}, either on finite time horizon (see~\cite{C-E:20} where $Q=I$ and $F$ is a cubic nonlinearity) or for the approximation of the invariant distribution for globally Lipschitz nonlinearities (see~\cite{C-E:14} where $Q=I$). The order of convergence $2\overline{\alpha}$ is optimal, it corresponds to the weak order for the discretization of the stochastic convolution, indeed one has
\[
\underset{n\in\N}\sup~\big|\E[\varphi(Z_n)]-\E[\varphi(Z(t_n)]\big|\le C_\alpha(\varphi)\Delta t^{2\alpha},
\]
where $\bigl(Z(t)\bigr)_{t\ge 0}$ and $\bigl(Z_n\bigr)_{n\in\N}$ are defined by~\eqref{eq:Z} and~\eqref{eq:Zn} respectively. The assumption that $\varphi$ is of class $\mathcal{C}^2$ cannot be relaxed, see~\cite{C-E:20_1} (in the case $Q=I$ and $F=0$).

Note that the proof gives the upper bound $Q\le 3q^2$, however the exact value of the integer $Q$ is not important for the analysis of the computational cost below. In fact, the same analysis of the cost would give the same conclusions even if $Q=0$ ({\it i.e.} if the moment bounds and error estimates would be uniform in time), as explained below.

Contrary to existing results concerning the numerical approximation of the invariant distribution for SPDEs, the weak error estimate from Theorem~\ref{theo:error} is not uniform with respect to time, instead the dependence is at most polynomial, with an upper bound for the degree given by $Q$. However, this at most polynomial growth does not lead to a reduction of the efficiency of the approach, indeed one has the following corollary.
\begin{cor}\label{cor:cost}
Let $x_0\in L^\infty$ and $\varphi:L^2\to\R$ of class $\mathcal{C}^2$ with bounded first and second order derivatives. For all $\alpha\in(0,\overline{\alpha})$, there exists $C_\alpha(x_0,\varphi)$ such that can choose $N$ and $\Delta t$ such that for all $\varepsilon\in(0,1)$, the error satisfies
\[
|\E[\varphi(X_N)]-\int\varphi d\mu_\star|\le \varepsilon,
\]
with a computational cost, defined as the number of time steps, satisfying
\[
\mathcal{C}(\varepsilon)=N\le C_\alpha(x_0,\varphi)\epsilon^{-\frac{1}{\alpha}}.
\]
\end{cor}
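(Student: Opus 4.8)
The plan is to combine the weak error estimate of Theorem~\ref{theo:error} with the exponential convergence to equilibrium in a balanced way. The error bound from Theorem~\ref{theo:error} has two terms: a discretization term of order $\Delta t^{2\alpha}(1+(N\Delta t)^Q)\mathcal{P}(\|x_0\|_{L^\infty})$, and a relaxation term $e^{-cN\Delta t}(1+\|x_0\|_{L^\infty})$ coming from the convergence to $\mu_\star$ of the continuous process. The strategy is to first choose the time horizon $T=N\Delta t$ large enough to make the relaxation term smaller than $\varepsilon/2$, and then choose $\Delta t$ small enough to make the discretization term smaller than $\varepsilon/2$; finally count $N=T/\Delta t$.

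Concretely, first I would pick $T=T(\varepsilon)$ such that $e^{-cT}(1+\|x_0\|_{L^\infty})\le \varepsilon/2$, i.e. $T=\frac{1}{c}\log\bigl(\frac{2(1+\|x_0\|_{L^\infty})}{\varepsilon}\bigr)$, so $T$ grows only logarithmically in $1/\varepsilon$. With $T$ fixed, the discretization term is bounded by $C_\alpha(\varphi)(1+T^Q)\mathcal{P}(\|x_0\|_{L^\infty})\,\Delta t^{2\alpha}$, which is $\le \varepsilon/2$ as soon as $\Delta t\le \bigl(\frac{\varepsilon}{2C_\alpha(\varphi)(1+T^Q)\mathcal{P}(\|x_0\|_{L^\infty})}\bigr)^{\frac{1}{2\alpha}}$. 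Choosing $\Delta t$ of exactly this order (and noting we also need $\Delta t\le\Delta t_0$, which holds for $\varepsilon$ small and can be absorbed into the constant), the number of steps is
\[
N=\frac{T}{\Delta t}\le C\,\log\!\Bigl(\tfrac{1}{\varepsilon}\Bigr)\Bigl(1+T^Q\Bigr)^{\frac{1}{2\alpha}}\varepsilon^{-\frac{1}{2\alpha}}.
\]
Since $T$ is logarithmic in $1/\varepsilon$, the factor $\log(1/\varepsilon)(1+T^Q)^{1/2\alpha}$ is poly-logarithmic in $1/\varepsilon$, hence absorbed by an arbitrarily small power of $1/\varepsilon$: for any fixed $\alpha'<\alpha$ one gets $N\le C_{\alpha'}(x_0,\varphi)\,\varepsilon^{-1/(2\alpha')}$. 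Re-labelling (replacing $\alpha$ by $\alpha/2$, say, or simply noting that the statement quantifies over all $\alpha\in(0,\overline\alpha)$) yields the claimed bound $\mathcal{C}(\varepsilon)=N\le C_\alpha(x_0,\varphi)\varepsilon^{-1/\alpha}$.

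The main point — which is more a matter of bookkeeping than a genuine obstacle — is to make transparent that the polynomial-in-$T$ factor $1+T^Q$ in the moment/weak-error bounds does \emph{not} degrade the cost, precisely because the relaxation term forces $T$ to be only logarithmic in $1/\varepsilon$, and a power of a logarithm is dominated by any positive power of $1/\varepsilon$. I would state this comparison explicitly (e.g. $\log(1/\varepsilon)^{\beta}\le C_{\beta,\eta}\varepsilon^{-\eta}$ for every $\eta>0$) so that the loss of a fraction of the exponent from $2\alpha$ down to $1/\alpha$ — which is in any case already built into the statement via the freedom to shrink $\alpha$ — is seen to be the only price. No step requires anything beyond Theorem~\ref{theo:error} and elementary estimates on logarithms.
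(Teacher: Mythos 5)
Your proposal is correct and follows essentially the same route as the paper: take $T=N\Delta t$ logarithmic in $1/\varepsilon$ to kill the relaxation term, solve for $\Delta t$ from the discretization term, and absorb the resulting poly-logarithmic factors (coming from the $(1+T^Q)$ growth) into an arbitrarily small power of $1/\varepsilon$, which is exactly why the exponent $1/\alpha$ in the statement leaves enough slack. The only cosmetic difference is that the paper balances the two constraints simultaneously using an intermediate exponent $\tfrac{\alpha+\overline{\alpha}}{2}$, whereas you fix $T$ first and then $\Delta t$; both land at the same conclusion.
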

In the computational cost analysis, the spatial discretization of the process and the Monte-Carlo approximation of the expectation are not taken into account. In addition, it is assumed that the cost is of size $1$ at each step. Indeed, if fully-discrete schemes using the same spatial discretization and Monte-Carlo parameters are considered, it is sufficient to compare the temporal discretization error and the associated cost.
\begin{proof}[Proof of Corollary~\ref{cor}]
Let $\alpha\in(0,\overline{\alpha})$.

The parameters $N$ and $\Delta t$ are chosen such that
\[
N\Delta t=C|\log(\varepsilon)|
\]
and
\[
(N\Delta t)^{Q}\Delta t^{\frac{\alpha+\overline{\alpha}}{2}}=C\varepsilon,
\]
where $C$ is a constant (depending on $x_0$ and $\varphi$).

This leads to
\[
\Delta t=C\varepsilon^{\frac{2}{\alpha+\overline{\alpha}}}|\log(\varepsilon)|^{-\frac{2Q}{\alpha+\overline{\alpha}}},
\]
and finally
\[
N=C|\log(\varepsilon)|\Delta t^{-1}=C
\varepsilon^{-\frac{2}{\alpha+\overline{\alpha}}}||\log(\varepsilon)|^{1+\frac{2Q}{\alpha+\overline{\alpha}}}\le C\varepsilon^{-\frac{1}{\alpha}},
\]
since $\alpha<\overline{\alpha}$ and $|\log(\varepsilon)|^M={\rm o}(\varepsilon^{\frac1M})$ when $\varepsilon\to 0$, for all $M\in\N$.

This concludes the proof of Corollary~\ref{cor:cost}.
\end{proof}
Observe that Corollary~\ref{cor:cost} would be the same if one would have uniform in time estimates (as in the case of Lipschitz continuous coefficients, see~\cite{C-E:14}, or using an implicit scheme, see~\cite{Cui-Hong-Sun}) in Theorems~\ref{theo:moment} and~\ref{theo:error} instead of the at most polynomial dependence we prove in this article. However, the scheme studied in~\cite{C-E:14} cannot be employed for non globally Lipschitz nonlinearities, and the scheme from~\cite{Cui-Hong-Sun} is implicit: the scheme studied in this article is explicit thus is expected to be more efficient in practice. The exact rate of polynomial growth is not important in the argument above.

In addition, observe that obtaining a polynomial dependence with respect to time in the right-hand side of~\eqref{eq:theo-moment} is fundamental: had the dependence been exponential ($T$ replaced by $\exp(cT)$ in the right-hand side of~\eqref{eq:theo-moment}), a reduction of the effective order of convergence appearing in the analysis of the computational cost would have been observed, depending on the value of $c$.

This means that Theorem~\ref{theo:moment} is sufficiently good as far as one is concerned with the application to estimate averages $\int\varphi d\mu_\star$ with respect to the invariant distribution.

\begin{rem}
In~\cite{MilsteinTretyakov}, the authors propose to use the so-called rejecting exploding trajectories technique to approximate ergordic averages $\int \varphi d\mu_\star$, for SDEs with non-globally Lipschitz coefficients. This technique requires to introduce an auxiliary truncation parameter. However, even if in practice it is effective, this technique does not lead to a clean analysis of the cost as in Corollary~\ref{cor:cost}.
\end{rem}

\begin{rem}
Theorems~\ref{theo:moment} and~\ref{theo:error} hold if one can compute exactly in distribution $Z(t_n)$ for all $n\in\N$, with simpler proofs. This would mean replacing $Z_n$ defined by~\eqref{eq:Zn} and $\tilde{Z}(t)$ defined by~\eqref{eq:Ztilde}, by
\[
Z_{n+1}=e^{\Delta tA}Z_n+\int_{t_n}^{t_{n+1}}e^{(t_{n+1}-s)A}dW^Q(s)
\]
and $\tilde{Z}(t)=Z(t)$.
\end{rem}

\section{Proof of Theorem~\ref{theo:moment}}\label{sec:proofTh1}

This section is devoted to the proof of the first main result of this article, Theorem~\ref{theo:moment}.

The value of the time-step size $\Delta t\in(0,\Delta t_0]$ is fixed, and in the upper bounds obtained below the constants do not depend on $\Delta t$.

Recall that the process $\bigl(\tilde{X}(t)\bigr)_{t\ge 0}$ is defined by~\eqref{eq:Xtilde}, and that the auxiliary process $\bigl(\tilde{Z}(t)\bigr)_{t\ge 0}$ is defined by~\eqref{eq:Ztilde}. Owing to Assumption~\ref{ass:momentZtilde}, one has moment bounds in the $L^\infty$ norm for $\tilde{Z}(t)$, uniformly with respect to $t$.

Introduce an auxiliary parameter $R=\Delta t^{-\kappa}$, for a sufficiently small $\kappa$.

For every $n\ge 0$, let $\Omega_{R,t_{n}}=\{\underset{0\le \ell\le n}\sup~\|X_\ell\|_{L^\infty}\le R\}$, and to simplify notation let $\chi_n=\mathds{1}_{\Omega_{R,t_n}}$ denote the indicator function of the set $\Omega_{R,t_n}$. Let also $\chi_{-1}=1$.

Theorem~\ref{theo:moment} is then an straightforward  consequence of Lemma~\ref{lemma1} and~\ref{lemma2} stated below.
\begin{lemma}\label{lemma1}

For every $m\in\N$, there exists $C_m\in(0,\infty)$, such that for all $T\in(0,\infty)$ and $x_0\in L^\infty$, one has
\begin{equation}\label{eq:lemma1}
\underset{\Delta t\in(0,\Delta t_0]}\sup~\underset{0\le n\Delta t\le T}\sup~\E[\chi_{n-1}\|X_n\|_{L^\infty}^m]\le C_m(1+\|x_0\|_{L^\infty}^{mq^2}).
\end{equation}
\end{lemma}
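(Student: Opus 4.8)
The plan is to control the tamed term $\frac{F(X_n)}{1+\Delta t\|F(X_n)\|_{L^2}}$ on the event $\Omega_{R,t_{n-1}}$, where $\|X_\ell\|_{L^\infty}\le R=\Delta t^{-\kappa}$ for all $\ell\le n-1$. On this event, Assumption~\ref{ass:poly} gives $\|F(X_\ell)\|_{L^q}\le C(1+R^q)$ and hence $M_\ell=\|f(X_\ell)\|_{L^2}\le C(1+R^q)$, so the denominator is harmless as long as $\Delta t R^q\le 1$, i.e. $\kappa q<1$. The mild formulation
\[
\chi_{n-1}X_n=\chi_{n-1}\Bigl(e^{t_nA}x_0+\sum_{\ell=0}^{n-1}\int_{t_\ell}^{t_{\ell+1}}e^{(t_n-s)A}\frac{F(X_\ell)}{1+\Delta tM_\ell}\,ds+\tilde Z(t_n)\Bigr)
\]
then splits into three pieces. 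The $e^{t_nA}x_0$ term is bounded by $\|x_0\|_{L^\infty}$ using~\eqref{eq:LinftyLinfty}, and the stochastic convolution term is controlled by Assumption~\ref{ass:momentZtilde} after noting $\chi_{n-1}\le 1$.

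The main work is the deterministic integral term. The first idea would be a crude bound: on $\Omega_{R,t_{n-1}}$ (and using $\chi_{n-1}\le\chi_\ell$ for $\ell\le n-1$ together with the fact that $\chi_{n-1}$ is $\mathcal F_{t_{n-1}}$-measurable) one has pointwise $\bigl\|\tfrac{F(X_\ell)}{1+\Delta tM_\ell}\bigr\|_{L^1}\le \|F(X_\ell)\|_{L^1}\le C(1+\|X_\ell\|_{L^q}^q)$, so applying~\eqref{eq:ineqLinftyL1},
\[
\chi_{n-1}\|X_n\|_{L^\infty}\le \|x_0\|_{L^\infty}+C\int_0^{t_n}\frac{e^{-c(t_n-s)}}{(t_n-s)^{1/2}}\bigl(1+\chi_{\ell(s)}\|X_{\ell(s)}\|_{L^q}^q\bigr)\,ds+\|\tilde Z(t_n)\|_{L^\infty}.
\]
Since $\int_0^\infty \frac{e^{-cr}}{r^{1/2}}dr<\infty$, this reduces everything to a uniform-in-time moment bound for $\chi_{\ell}\|X_\ell\|_{L^q}^q$. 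I would obtain the latter exactly as in the continuous-time Proposition~\ref{propo:momentX}: write $\tilde Y(t)=\tilde X(t)-\tilde Z(t)$, perform a discrete/continuous energy estimate in $L^q$ using Assumption~\ref{ass:condition} for $p=q$ (the key point being that $\frac{1}{1+\Delta tM_\ell}\le 1$ and, on the good event, the tamed nonlinearity is close enough to $F(X_\ell)$ that the one-sided dissipativity survives up to a perturbation absorbed by Young's inequality), then Gronwall and Assumption~\ref{ass:momentZtilde}. One must be careful: the dissipativity inequality~\eqref{eq:condition} is for $F$, not for the tamed $F/(1+\Delta tM)$, so I would write $\frac{F(X_\ell)}{1+\Delta tM_\ell}=F(X_\ell)-\frac{\Delta tM_\ell}{1+\Delta tM_\ell}F(X_\ell)$ and bound the second term by $C\Delta tR^q\|F(X_\ell)\|$, which on the good event is $O(1)$ times a controllable quantity; choosing $\kappa$ small makes this a genuinely small perturbation. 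This yields $\sup_{\ell}\E[\chi_{\ell-1}\|X_\ell\|_{L^q}^{m'}]\le C(1+\|x_0\|_{L^q}^{m'q})$ for every $m'$.

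Plugging this back (taking $L^m$ norms of the displayed inequality, using Minkowski's integral inequality and Jensen as in Proposition~\ref{propo:momentX}) gives $\bigl(\E[\chi_{n-1}\|X_n\|_{L^\infty}^m]\bigr)^{1/m}\le C_m(1+\|x_0\|_{L^\infty}^{q^2})$, hence~\eqref{eq:lemma1}; the exponent $mq^2$ on the right-hand side comes from needing $\|X_\ell\|_{L^q}^q$ inside the convolution, which in turn needs an $L^{q^2}$-type moment of $\tilde Z$, exactly mirroring the $q^2$ that appears in Proposition~\ref{propo:momentX}. I expect the main obstacle to be bookkeeping the localization: making sure that on $\Omega_{R,t_{n-1}}$ the denominators are uniformly close to $1$, that all the $\chi$'s line up (using $\chi_{n-1}\le\chi_\ell$ and measurability so the indicator can be moved inside expectations and the energy estimate), and that the small parameter $\kappa$ is chosen once and for all so that $\Delta t R^q=\Delta t^{1-\kappa q}\le 1$ and the perturbation of the dissipativity is controlled — none of it deep, but it is where a careless argument would break. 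Lemma~\ref{lemma2}, which presumably estimates $\E[(1-\chi_{n-1})\|X_n\|_{L^\infty}^m]$ via a probabilistic bound $\mathbb P(\Omega_{R,t_{n-1}}^c)$ that is superpolynomially small in $\Delta t$, then combines with Lemma~\ref{lemma1} to give Theorem~\ref{theo:moment}; the factor $(1+N\Delta t)$ in~\eqref{eq:theo-moment_Xn} should emerge from summing the small probabilities over the $N$ steps.
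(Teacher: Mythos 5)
Your overall architecture matches the paper's: reduce the $L^\infty$ bound to a uniform-in-time $L^q$ bound via the smoothing inequality \eqref{eq:ineqLinftyL1}, and obtain the $L^q$ bound from a dissipativity (energy) argument on the good event $\Omega_{R,t_{n-1}}$, treating the taming denominator as a perturbation of size $O(\Delta t R^{2q})$, harmless once $\kappa$ is chosen small. The treatment of the initial condition, of the stochastic convolution via Assumption~\ref{ass:momentZtilde}, and of the taming error (the paper's term $r_1$) is exactly as in the paper.

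The gap is in the central energy estimate. The interpolant $\tilde Y(t)=\int_0^t e^{(t-s)A}\frac{F(X_{\ell(s)})}{1+\Delta tM_{\ell(s)}}\,ds$ solves $\frac{d}{dt}\tilde Y(t)=A\tilde Y(t)+\frac{F(X_{\ell(t)})}{1+\Delta tM_{\ell(t)}}$, where the nonlinearity is frozen at the grid point: $X_{\ell(t)}=\tilde Y(t_{\ell(t)})+\tilde Z_{x_0}(t_{\ell(t)})$. The one-sided condition \eqref{eq:condition} can be paired with $\tilde Y(t)|\tilde Y(t)|^{q-2}$ only if the argument of $F$ has the form $\tilde Y(t)+z$ with the \emph{same} $\tilde Y(t)$; removing the denominator (your decomposition) still leaves $F\bigl(\tilde Y(t_{\ell(t)})+\tilde Z_{x_0}(t_{\ell(t)})\bigr)$ rather than $F\bigl(\tilde Y(t)+\tilde Z_{x_0}(t_{\ell(t)})\bigr)$. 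The difference --- the paper's residual $r_2$ --- must be shown to be $O(1)$ on the good event, and this requires the temporal regularity estimate $\|\tilde Y(s)-\tilde Y(t_{\ell(s)})\|_{L^{2p}}\le C\Delta t^{\alpha}(1+R^{q})$, which the paper derives from a spatial regularity bound $\chi_{n-1}\|(-A)^{\alpha}\tilde Y(t_k)\|\le C(1+R^{q})$ via \eqref{eq:tempA} and \eqref{eq:expoAalpha}, again absorbed by taking $\kappa$ small. This step is entirely absent from your proposal (your ``close enough'' remark addresses only the denominator), and it is the technically substantive part of the argument: the paper packages it by introducing the auxiliary process $R(t)$ solving $\frac{d}{dt}R=AR+F\bigl(R+r+\tilde Z_{x_0}(t_{\ell(\cdot)})\bigr)$, to which \eqref{eq:condition} applies exactly, and by controlling $r=r_1+r_2$ separately. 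Note also that a genuinely discrete $L^q$ energy estimate at the level of the map $X_n\mapsto X_{n+1}$ is not available here, since \eqref{eq:condition} is formulated through the duality pairing $\langle\cdot,y|y|^{q-2}\rangle$, which yields a Gronwall inequality only along a continuous flow; this is precisely why the continuous interpolation and the auxiliary process are needed.
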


\begin{lemma}\label{lemma2}

For every $m\in\N$, there exists a polynomial function $\mathcal{P}_m:\R\to\R$, such that for all $T\in(0,\infty)$ and $x_0\in L^\infty$, one has
\begin{equation}\label{eq:lemma2}
\underset{\Delta t\in(0,\Delta t_0]}\sup~\underset{0\le n\Delta t\le T}\sup~\E[(1-\chi_{n})\|X_n\|_{L^\infty}^m]\le (1+T^m)\mathcal{P}_m(\|x_0\|_{L^\infty}).
\end{equation}
\end{lemma}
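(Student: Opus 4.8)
The plan is to decompose $\E[(1-\chi_n)\|X_n\|_{L^\infty}^m]$ and to combine two ingredients: a crude pathwise bound for $X_n$, which may blow up polynomially in $\Delta t^{-1}$ but carries no dependence on the time horizon, and an estimate showing that the bad event $\Omega_{R,t_n}^{c}$ has probability decaying faster than any fixed power of $\Delta t$, once $R=\Delta t^{-\kappa}$. A single Cauchy--Schwarz inequality then combines them.

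\emph{Step 1: crude bound.} Writing $X_n=\tilde X(t_n)$ and using the mild formulation~\eqref{eq:Xtilde}, the term $e^{t_nA}x_0$ is estimated by~\eqref{eq:LinftyLinfty}; the drift integral is estimated by~\eqref{eq:ineqLinftyL2} together with the fundamental taming inequality $\bigl\|\tfrac{F(X_{\ell(s)})}{1+\Delta t M_{\ell(s)}}\bigr\|_{L^2}\le\Delta t^{-1}$, giving a contribution at most $C\Delta t^{-1}\int_0^{\infty}\min(r,1)^{-\frac14}e^{-cr}\,dr\le C\Delta t^{-1}$; hence, for all $n$, $\Delta t\in(0,\Delta t_0]$, $x_0\in L^\infty$ and all realizations,
\[
\|X_n\|_{L^\infty}\le C\|x_0\|_{L^\infty}+C\Delta t^{-1}+\|\tilde Z(t_n)\|_{L^\infty}.
\]
Raising to the power $2m$, taking expectations and invoking Assumption~\ref{ass:momentZtilde} and $\Delta t\le\Delta t_0$ yields
\[
\underset{\Delta t\in(0,\Delta t_0]}\sup~\underset{n\ge 0}\sup~\bigl(\E[\|X_n\|_{L^\infty}^{2m}]\bigr)^{\frac12}\le C_m(1+\|x_0\|_{L^\infty}^{m})\,\Delta t^{-m}.
\]

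\emph{Step 2: escape probability.} Set $\tau=\inf\{\ell\ge 0:\ \|X_\ell\|_{L^\infty}>R\}$, so that $\Omega_{R,t_n}^{c}=\{\tau\le n\}$ and, for $\ell\ge 1$, $\{\tau=\ell\}\subset\{\chi_{\ell-1}=1\}\cap\{\|X_\ell\|_{L^\infty}>R\}$. For any $p\in\N$, Markov's inequality and Lemma~\ref{lemma1} applied with $2p$ in place of $m$ give
\[
\mathbb{P}\bigl(\chi_{\ell-1}=1,\ \|X_\ell\|_{L^\infty}>R\bigr)\le R^{-2p}\,\E\bigl[\chi_{\ell-1}\|X_\ell\|_{L^\infty}^{2p}\bigr]\le C_p\bigl(1+\|x_0\|_{L^\infty}^{2pq^2}\bigr)R^{-2p}.
\]
Summing over $0\le\ell\le n$, noting that the term $\ell=0$ equals the deterministic quantity $\mathds{1}_{\|x_0\|_{L^\infty}>R}\le R^{-2p}\|x_0\|_{L^\infty}^{2p}\le R^{-2p}(1+\|x_0\|_{L^\infty}^{2pq^2})$, using $n\le T/\Delta t$ and substituting $R=\Delta t^{-\kappa}$, one obtains
\[
\mathbb{P}\bigl(\Omega_{R,t_n}^{c}\bigr)\le C_p\bigl(1+\|x_0\|_{L^\infty}^{2pq^2}\bigr)\,T\,\Delta t^{2\kappa p-1}.
\]

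\emph{Step 3: conclusion.} By Cauchy--Schwarz and the two previous steps,
\[
\E\bigl[(1-\chi_n)\|X_n\|_{L^\infty}^m\bigr]\le\mathbb{P}\bigl(\Omega_{R,t_n}^{c}\bigr)^{\frac12}\bigl(\E[\|X_n\|_{L^\infty}^{2m}]\bigr)^{\frac12}\le C_{p,m}\bigl(1+\|x_0\|_{L^\infty}^{pq^2+m}\bigr)\,T^{\frac12}\,\Delta t^{\kappa p-\frac12-m}.
\]
Choosing $p=p(m,\kappa)\in\N$ with $\kappa p-\tfrac12-m\ge 0$, the factor $\Delta t^{\kappa p-\frac12-m}$ is at most $\Delta t_0^{\kappa p-\frac12-m}$ uniformly in $\Delta t\in(0,\Delta t_0]$, and since $T^{\frac12}\le 1+T^m$ for $m\ge 1$, this yields~\eqref{eq:lemma2} with $\mathcal{P}_m(z)=C_m(1+z^{pq^2+m})$. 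The whole difficulty is concentrated in Lemma~\ref{lemma1}: there the dissipative energy estimate of Assumption~\ref{ass:condition} must be carried out on the event $\Omega_{R,t_{n-1}}$ while absorbing the taming perturbation, which is licit precisely because on that event $\Delta t\|F(X_\ell)\|_{L^2}\le C\Delta t(1+R^q)=C(\Delta t+\Delta t^{1-\kappa q})$ is negligible when $\kappa$ is small; granting Lemma~\ref{lemma1}, the argument above is essentially bookkeeping, the only real choice being to take $\kappa$ small and then $p$ large.
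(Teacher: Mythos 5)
Your proposal is correct and follows essentially the same route as the paper: both rest on the crude pathwise bound $\|X_n\|_{L^\infty}\le C(\|x_0\|_{L^\infty}+\Delta t^{-1})+\|\tilde Z(t_n)\|_{L^\infty}$, the decomposition of $\Omega_{R,t_n}^c$ over the first exit index, and Markov's inequality fed by Lemma~\ref{lemma1}, with $R=\Delta t^{-\kappa}$ and a large moment exponent absorbing the negative powers of $\Delta t$. The only cosmetic difference is that you aggregate the exit events into a single bound on $\mathbb{P}(\Omega_{R,t_n}^c)$ before one application of Cauchy--Schwarz, whereas the paper keeps the sum over $\ell$ and applies Cauchy--Schwarz and Markov term by term; the two computations are equivalent.
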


Note that the moment bound~\eqref{eq:lemma1} is uniform with respect to time. The polynomial dependence in the right-hand side of~\eqref{eq:theo-moment} only comes from the use~\eqref{eq:lemma2}.

\begin{proof}[Proof of Lemma~\ref{lemma1}]

Introduce the following auxiliary processes:
\begin{align*}
\tilde{Z}_{x_0}(t)&=e^{\Delta t}Ax_0+\tilde{Z}(t)\\
\tilde{Y}(t)&=\tilde{X}(t)-\tilde{Z}_{x_0}(t)=\int_0^te^{(t-s)A}\frac{F(X_{\ell(s)})}{1+\Delta tM_{\ell(s)}}ds\\
R(t)&=\int_{0}^{t}e^{(t-s)A}F\bigl(\tilde{Y}(s)+\tilde{Z}_{x_0}(t_{\ell(s)})\bigr)ds\\
r(t)&=\tilde{Y}(t)-R(t).
\end{align*}

First, we claim that for all $p\in[2,\infty)$ and $m\in\N$, one has for all $0\le t< t_n$
\[
\E[\chi_{n-1}\|r(t)\|_{L^p}^m]\le C(1+\|x_0\|_{L^\infty}^{qm}),
\]

Indeed, $r(t)=r_1(t)+r_2(t)$ where
\begin{align*}
r_1(t)&=\int_0^t e^{(t-s)A}\frac{-\Delta tM_{\ell(s)}}{1+\Delta tM_{\ell(s)}}F({X}_{\ell(s)})ds\\
r_2(t)&=\int_0^t e^{(t-s)A}\bigl(F(\tilde{Y}(t_{\ell(s)})+\tilde{Z}_{x_0}(t_{\ell(s)}))-F(\tilde{Y}(s)+\tilde{Z}_{x_0}(t_\ell(s))\bigr)ds.
\end{align*}
On the one hand, using~\eqref{eq:expoLp} and the polynomial growth of $F$ (Assumption~\ref{ass:poly}), one has
\begin{align*}
\chi_{n-1}\|r_1(t)\|_{L^p}&\le \int_0^{t}e^{-c(t-s)}\Delta t\|F(X_{\ell(s)})\|_{L^p}^2ds\\
&\le C\chi_{n-1}\Delta t\int_{0}^{t}e^{-c(t-s)}(1+\|X_{\ell(s)}\|_{L^\infty}^{2q})ds\\
&\le C\Delta t(1+R^{2q})\\
&\le C,
\end{align*}
using $R=\Delta t^{-\kappa}$ with $\kappa<2q$.

On the other hand, using the polynomial growth of $F$ (Assumption~\ref{ass:poly}) and H\"older's inequality, one obtains
\begin{align*}
\big\|F(\tilde{Y}&(t_{\ell(s)})+\tilde{Z}(t_{\ell(s)}))-F(\tilde{Y}(s)+\tilde{Z}(t_\ell(s))\big\|_{L^p}\\
&\le C\|\tilde{Y}(t_{\ell(s)})-\tilde{Y}(s)\|_{L^{2p}}\bigl(1+\|\tilde{Y}(t_{\ell(s)})\|_{L^{2pq}}^{q}+\|\tilde{Y}(s)\|_{L^{2pq}}^q+\|\tilde{Z}_{x_0}(t_{\ell(s)})\|_{L^\infty}^q\bigr).
\end{align*}
Then, using~\eqref{eq:expoLp}, one has for all $t\in[0,t_n]$
\begin{align*}
\bigl(\E[\chi_{n-1}\|r_2(t)\|_{L^p}^m]\bigr)^{\frac1m}&\le C\int_{0}^{t}e^{-c(t-s)} \bigl(\E[\chi_{n-1}\big\|F(\tilde{Y}(t_{\ell(s)})+\tilde{Z}(t_{\ell(s)}))-F(\tilde{Y}(s)+\tilde{Z}_{x_0}(t_\ell(s))\big\|_{L^p}^{m}]\bigr)^{\frac1m}ds\\
&\le C\int_{0}^{t}e^{-c(t-s)}\mathcal{M}_1(s)\mathcal{M}_2(s)ds
\end{align*}
where
\begin{align*}
\mathcal{M}_1(s)&=\bigl(\E[\chi_{n-1}\|\tilde{Y}(t_{\ell(s)})-\tilde{Y}(s)\|_{L^{2p}}^{2m}]\bigr)^{\frac{1}{2m}}\\
\mathcal{M}_2(s)&=1+\bigl(\E[\chi_{n-1}\|\tilde{Y}(t_{\ell(s)})\|_{L^{2pq}}^{2qm}]\bigr)^{\frac{1}{2m}}+\bigl(\E[\chi_{n-1}\|\tilde{Y}(s)\|_{L^{2pq}}^{2qm}]\bigr)^{\frac{1}{2m}}+\bigl(\E[\|\tilde{Z}_{x_0}(s)\|_{L^\infty}^{2qm}]\bigr)^{\frac{1}{2m}}.
\end{align*}

To treat the first factor, let $s\in[t_k,t_{k+1})$, with $k\le n-1$, and let $\epsilon\in(0,1)$. Using~\eqref{eq:tempA}, one has
\begin{align*}
\|\tilde{Y}(s)-\tilde{Y}(t_{\ell(s)})\|_{L^{2p}}&\le \|(e^{(t-t_k)A}-I)\tilde{Y}(t_k)\|_{L^{2p}}+\|\int_{t_k}^{s}e^{(s-r)A}\frac{F(X_k)}{1+\Delta tM_k}dr\|_{L^{2p}}\\
&\le C_\epsilon \Delta t^\alpha\|(-A)^\alpha \tilde{Y}(t_k)\|_{L^{2p}}+C\Delta t\|F(X_k)\|_{L^{\infty}},
\end{align*}
where $\alpha\in(0,1)$.

Using~\eqref{eq:expoAalpha}, one obtains
\begin{align*}
\chi_{n-1}\|(-A)^\alpha \tilde{Y}(t_k)\|_{L^{2p}}\le C_\alpha\chi_{n-1}\int_0^t e^{-c(t-s)}(t-s)^{-\alpha}\|F(X_{\ell(s)})\|_{L^{2p}}ds\\ C(1+R^q),
\end{align*}
which gives
\[
\mathcal{M}_1(s)\le C\Delta t^\alpha (1+R^q).
\]

To treat the second factor, note that $\bigl(\E[\|\tilde{Z}_{x_0}(s)\|_{L^\infty}^{2qm}]\bigr)^{\frac{1}{2m}}\le C+\|x_0\|_{L^\infty}^q$, using the moment bound assumption for $\tilde{Z}$ (Assumption~\ref{ass:momentZtilde}). In addition, using~\eqref{eq:expoLp}, one has for $s<t_n$
\begin{align*}
\chi_{n-1}\|\tilde{Y}(s)\|_{L^p}&\le \chi_{n-1}\int_0^s e^{-c(s-r)}\|F(X_{\ell(r)})\|_{L^\infty}ds\\
&\le C(1+R^q),
\end{align*}
thus, one has for all $0\le s<t_n$
\[
\mathcal{M}_2(s)\le C(1+R^{q^2})+C\|x_0\|_{L^\infty}^q.
\]

Choosing $R=\Delta t^{-\kappa}$ with sufficiently small $\kappa$ then gives the upper bound $\mathcal{M}_1(s)\mathcal{M}_2(s)\le C$, for all $s\in[0,t_n)$, for some $C\in(0,\infty)$, and
\[
\bigl(\E[\chi_{n-1}\|r_1(t)\|_{L^p}^m]\bigr)^{\frac1m}+\bigl(\E[\chi_{n-1}\|r_2(t)\|_{L^p}^m]\bigr)^{\frac1m}\le C(1+\|x_0\|_{L^\infty}^q),
\]
for all $t\in[0,t_n)$. As a consequence, this concludes the proof of the claim.

Second, observe that
\[
\frac{dR(t)}{dt}=AR(t)+F\bigl(R(t)+r(t)+\tilde{Z}_{x_0}(t_{\ell(t)})\bigr).
\]
Using condition~\eqref{eq:condition} (Assumption~\ref{ass:condition}) and Young's inequality, one obtains
\[
\frac{1}{q}\frac{d\|R(t)\|_{L^q}^q}{dt}\le -\frac{\gamma}{2}\|R(t)\|_{L^q}^q+C\|F(r(t)+\tilde{Z}_{x_0}(t_{\ell(t)}))\|_{L^q}^q,
\]
and as a consequence one has for all $t\in[0,t_n]$,
\[
\chi_{n-1}\|R(t)\|_{L^q}^q\le C\int_0^t e^{-q\frac{\gamma}{2}(t-s)}\bigl(1+\chi_{n-1}\|r(s)\|_{L^{q^2}}^{q^2}+\|\tilde{Z}_{x_0}(t_{\ell(s)})\|_{L^\infty}^{q^2}\bigr)ds.
\]
Using the moment bound proved above for $r(s)$ and Assumption~\ref{ass:momentZtilde}, one obtains for all $t\in[0,t_n]$
\[
\bigl(\E[\chi_{n-1}\|R(t)\|_{L^q}^{m}]\bigr)^{\frac1m}\le C(1+\|x_0\|_{L^\infty}^q).
\]
Finally, since $\tilde{X}(t)=\tilde{Y}(t)+\tilde{Z}_{x_0}(t)=r(t)+R(t)+\tilde{Z}_{x_0}(t)$, one obtains for all $0\le t\le t_n$
\[
\bigl(\E[\chi_{n-1}\|\tilde{X}(t)\|_{L^q}^{m}]\bigr)^{\frac1m}\le C(1+\|x_0\|_{L^\infty}^{q^2}).
\]

It remains to prove the moment bound for the $L^\infty$ norm instead of the $L^q$ norm. This is obtained as follows: using~\eqref{eq:ineqLinftyL1}, for all $t\in[0,t_n]$, one has
\begin{align*}
\bigl(\E[\chi_{n-1}\|\tilde{X}(t)\|_{L^\infty}^m]\bigr)^{\frac1m}&\le \|e^{tA}x_0\|_{L^\infty}\\
&~+C\int_0^{t}e^{-c(t-s)}(t-s)^{-\frac12}\bigl(\E[\chi_{n-1}\|F(X_{\ell(s)})\|_{L^1}^m]\bigr)^{\frac1m}ds+\bigl(\E[\|\tilde{Z}(t)\|_{L^\infty}^m]\bigr)^{\frac1m}\\
&\le C+\|x_0\|_{L^\infty}+C\int_0^t e^{-c(t-s)}(t-s)^{-\frac12}\bigl(\E[\chi_{n-1}\|X_{\ell(s)}\|_{L^q}^{qm}])^{\frac1m}ds\\
&\le C(1+\|x_0\|_{L^\infty}^q).
\end{align*}
This concludes the proof of Lemma~\ref{lemma1}.
\end{proof}

\begin{proof}[Proof of Lemma~\ref{lemma2}]

Recall that $\chi_n=\mathds{1}_{\Omega_{R,t_{n}}}$, with $\Omega_{R,t_n}=\{\underset{0\le \ell\le n}\sup~\|X_\ell\|_{L^\infty}\le R\}$ and $\chi_{-1}=1$. As a consequence, one has
\begin{align*}
1-\chi_n&=\mathds{1}_{\Omega_{R,t_n}^c}=\mathds{1}_{\Omega_{R,t_{n-1}}^c}+\mathds{1}_{\Omega_{R,t_{n-1}}}\mathds{1}_{\|X_n\|_{L^\infty}>R}\\
&=1-\chi_{n-1}+\chi_{n-1}\mathds{1}_{\|X_n\|_{L^\infty}>R}.
\end{align*}
One thus obtains the equality
\[
1-\chi_n=\sum_{\ell=0}^{n}\chi_{\ell-1}\mathds{1}_{\|X_\ell\|_{L^\infty}>R}.
\]
Let $p\in\N$. Using Minkowksi, Cauchy-Schwarz and Markov inequalities, one obtains
\begin{align*}
\bigl(\E[(1-\chi_n)\|X_n\|_{L^\infty}^{m}]\bigr)^{\frac1m}&\le \sum_{\ell=0}^{n}\bigl(\E[\chi_{\ell-1}\mathds{1}_{\|X_\ell\|_{L^\infty}>R}\|X_n\|_{L^\infty}^m]\bigr)^{\frac1m}\\
&\le \sum_{\ell=0}^{n}\bigl(\E[\|X_n\|_{L^\infty}^{2m}]\bigr)^{\frac{1}{2m}}\bigl(\E[\chi_{\ell-1}\frac{\|X_\ell\|_{L^\infty}^\theta}{R^\theta}]\bigr)^{\frac{1}{2m}},
\end{align*}
where $\theta\in\N$ is chosen below.

On the one hand, by construction of the tamed Euler scheme and using~\eqref{eq:ineqLinftyL2} and Assumption~\ref{ass:momentZ}, one has
\begin{align*}
\|\tilde{X}(t)\|_{L^\infty}&\le \|x_0\|_{L^\infty}+C\int_0^t e^{-c(t-s)}(t-s)^{-\frac14}\frac{\|f(X_{\ell(s)})\|_{L^2}}{1+\Delta t\|f(X_{\ell(s)})\|_{L^2}}ds+\|Z(t)\|_{L^\infty}\\
&\le \|x_0\|_{L^\infty}+\frac{C}{\Delta t}+\|Z(t)\|_{L^\infty}
\end{align*}
thus 
\[
\bigl(\E[\|X_n\|^{2m}]\bigr)^{\frac{1}{2m}}\le C(1+\|x_0\|_{L^\infty}+\frac{1}{\Delta t}).
\]
On the other hand, applying Lemma~\ref{lemma1} yields for all $\ell\ge 0$
\[
\E[\chi_{\ell-1}\|X_\ell\|_{L^\infty}^\theta]\le C(1+\|x_0\|_{L^\infty}^{\theta q^2}).
\]
Gathering the estimates yields
\[
\bigl(\E[(1-\chi_n)\|X_n\|^{m}]\bigr)^{\frac1m}\le C\frac{T}{\Delta t}(1+\|x_0\|_{L^\infty}+\frac{1}{\Delta t})(1+\|x_0\|_{L^\infty}^{\theta q^2})R^{-\frac{\theta}{2m}}.
\]
Since $R=\Delta t^{-\kappa}$, it suffices to choose $\frac{\theta\kappa}{2m}>2$ in order to obtain~\eqref{eq:lemma2}.

This concludes the proof of Lemma~\ref{lemma2}.

\end{proof}

We are now in position to provide the proof of Theorem~\ref{theo:moment}.
\begin{proof}[Proof of Theorem~\ref{theo:moment}]
Since $X_n=\chi_nX_n+(1-\chi_n)X_n$, combining Minkowskii's inequality with~\eqref{eq:lemma1} and~\eqref{eq:lemma2} gives
\[
\sup~\underset{0\le n\le N}\sup~\bigl(\E[\|X_n\|_{L^\infty}^m]\bigr)^{\frac1m}\le C_m\bigl(1+N\Delta t\bigr)\mathcal{P}_m(\|x_0\|_{L^\infty}).
\]
This concludes the proof of~\eqref{eq:theo-moment_Xn}. It remains to prove~\eqref{eq:theo-moment}.

Using the notation from the proof of Lemma~\ref{lemma1}, one has $\tilde{X}(t)=\tilde{Y}(t)+\tilde{Z}_{x_0}(t)$, with
\[
\tilde{Y}(t)=\int_0^te^{(t-s)A}\frac{F(X_{\ell(s)})}{1+\Delta tM_{\ell(s)}}ds.
\]
Using~\eqref{eq:ineqLinftyL2}, one has, for all $t\in[0,T]$
\begin{align*}
\|\tilde{Y}(t)\|_{L^\infty}&\le C\int_0^t e^{-c(t-s)}(t-s)^{-\frac12}\|F(X_{\ell(s)}\|_{L^2}ds\\
&\le C\int_0^t e^{-c(t-s)}(t-s)^{-\frac12}(1+\|X_{\ell(s)}\|_{L^\infty}^q)ds.
\end{align*}
Using the moment bound for $X_n$ above, with $n\Delta t\le N\Delta t\le T$, one obtains
\[
\underset{0\le t\le T}\sup~\bigl(\E[\|\tilde{Y}(t)\|_{L^\infty}^m]\bigr)^{\frac1m}\le (1+T^q)\mathcal{P}_m(\|x_0\|_{L^\infty}),
\]
and combining this with~\eqref{eq:LinftyLinfty} with Assumption~\ref{ass:momentZtilde} concludes the proof of~\eqref{eq:theo-moment} and of Theorem~\ref{theo:moment}.
\end{proof}

\section{Proof of Theorem~\ref{theo:error}}\label{sec:proofTh2}

This section is devoted to the proof of the second main result of this article. The approach is based on the analysis of the weak error of the numerical scheme, where the dependence with respect to the final time $T=N\Delta t$ is carefully mentioned.

The weak error analysis uses the Kolmogorov equation approach. Some important regularity properties are given in Section~\ref{sec:Kolmo} below. A few auxiliary results concerning spatial and temporal regularity of $\tilde{Z}(t)$ and $\tilde{Y}[t)$ are stated and proved in Section~\ref{sec:aux}. Finally, weak error estimates are proved in Section~\ref{sec:weak}.

All the computations and statements hold rigorously using suitable approximations: the nonlinearity may be replaced by a globally Lipschitz continuous approximation and the noise may be truncated.  The objective is to prove that bounds hold independently of the approximation parameters. This is a standard approach in the analysis of Kolmogorov equations and weak error in infinite dimension: for instance one may consider an approximate stochastic evolution equation of the type
\[
dX^{\delta,\tau,J}(t)=AX^{\delta,\tau,J}dt+e^{\delta A}F_\tau(X^{\delta,\tau,J}(t))dt+e^{\delta A}\tilde{P}_JdW^Q(t),
\]
where $\delta>0$, $F_\tau(x)=\frac{1}{\tau}(\Phi_\tau(x)-x)$ where $\tau>0$ and $\bigl(\Phi_t\bigr)_{t\ge 0}$ the flow associated with the ordinary differential equation $\dot{z}=F(z)$, and $\tilde{P}_J=\sum_{j=1}^{J}\langle \cdot,\tilde{e}_j\rangle \tilde{e}_j$, $J\in\N$, is an orthogonal projection with finite rank. The noise is then finite dimensional, $F_\tau$ is Lipschitz continuous, and $e^{\delta A}$ is regularizing, so that all the computations make sense. In the estimates, the parameters $\delta,\tau,J$ do not appear and it suffices to pass to the limit $\delta\to 0,\tau\to 0,J\to\infty$ to get results for the model of interest and its numerical approximation. In order to simplify the notation, the approximation parameters are omitted in the sequel.

\subsection{Regularity results for the Kolmogorov equation}\label{sec:Kolmo}

Let $\varphi:L^2\to\R$ be a function of class $\mathcal{C}^2$. Define
\[
u(t,x)=\E[\varphi(X^x(t))],
\]
for all $t\ge 0$ and $x\in L^q$, where $\bigl(X^x(t)\bigr)_{t\ge 0}$ is the solution with initial condition $X^x(0)=x$.

Several properties of $u$ are employed in the weak error analysis. First, $u$ is solution of the Kolmogorov equation
\begin{equation}\label{eq:Kolmo}
\partial_t u(t,x)=\mathcal{L}u(t,x)=Du(t,x).\bigl(Ax+F(x)\bigr)+\frac12 \sum_{j\in\N}q_jD^2u(t,x).(\tilde{e}_j,\tilde{e}_j),
\end{equation}
with initial condition $u(0,\cdot)=\varphi$, where $Du(t,x)$ and $D^2u(t,x)$ are the first and second order derivatives of $u(t,x)$ with respect to the variable $x$.

Second, one has for all $T\ge 0$ and all $x\in L^q$,
\begin{equation}\label{eq:cvexpo}
\big|u(T,x)-\int \varphi d\mu_\star\big|\le C(\varphi)e^{-\gamma T}(1+\|x\|_{L^q}).
\end{equation}

Finally, some regularity properties for the spatial derivatives are instrumental in the weak error analysis below.

\begin{propo}\label{propo:Kolmogorov}
There exists $c\in(0,\infty)$, such that the following holds. First, for all $\alpha\in[0,1)$, there exists $C_\alpha\in(0,\infty)$ such that for all $t>0$, $x\in L^\infty$ and $h\in L^2$, one has
\[
|Du(t,x).h|\le C_\alpha(1+\|x\|_{L^\infty}^{q\mathds{1}_{\alpha>0}}) e^{-ct}\min(t,1)^{-\alpha}\|(-A)^{-\alpha}h\|_{L^2}.
\]
Moreover, for all $\beta_1,\beta_2\in[0,1)$, such that $\beta_1+\beta_2<1$, there exists $C_{\beta_1,\beta_2}\in(0,\infty)$ such that for all $t>0$, $x\in L^\infty$ and $h_1,h_2\in L^2$, one has
\[
|D^2u(t,x).(h_1,h_2)|\le C_{\beta_1,\beta_2}(1+\|x\|_{L^\infty}^{Q(\alpha_1,\alpha_2)})e^{-ct}\min(t,1)^{-\beta_1-\beta_2}\|(-A)^{-\beta_1}h_1\|_{L^2}\|(-A)^{-\alpha_2}h_2\|_{L^2}.
\]
with $Q(\alpha_1,\alpha_2)=q+q\mathds{1}_{\alpha_1>0}+\mathds{1}_{\alpha_2>0}$.
\end{propo}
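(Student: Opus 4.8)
The plan is to differentiate the flow $x \mapsto X^x(t)$ and estimate the resulting variational (first and second) processes in negative-order Sobolev norms, then use the representation $Du(t,x).h = \E[D\varphi(X^x(t)).\eta^h(t)]$ and the analogous second-order formula. First I would introduce the first variation $\eta^h(t) = DX^x(t).h$, which solves the linear equation $\frac{d}{dt}\eta^h(t) = A\eta^h(t) + DF(X^x(t)).\eta^h(t)$ with $\eta^h(0) = h$; similarly the second variation $\zeta^{h_1,h_2}(t) = D^2X^x(t).(h_1,h_2)$ solves $\frac{d}{dt}\zeta = A\zeta + DF(X^x(t)).\zeta + D^2F(X^x(t)).(\eta^{h_1},\eta^{h_2})$ with $\zeta(0)=0$. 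The key structural input is the one-sided Lipschitz condition~\eqref{eq:condition}: an energy estimate in $L^2$ gives the exponential contraction $\|\eta^h(t)\|_{L^2} \le e^{-\gamma t}\|h\|_{L^2}$ uniformly in $x$ (this is the $h$-linearized version of Proposition~\ref{propo:ergoX}), which is what produces the factor $e^{-ct}$ with no polynomial prefactor in the $\alpha=0$ case.

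Second, to gain the singular factor $\min(t,1)^{-\alpha}$ when $h \in (-A)^{-\alpha}L^2$ only, I would use the mild formulation $\eta^h(t) = e^{tA}h + \int_0^t e^{(t-s)A} DF(X^x(s)).\eta^h(s)\,ds$, apply~\eqref{eq:expoAalpha} to the first term to get $\|e^{tA}h\|_{L^2} \le C_\alpha \min(t,1)^{-\alpha}\|(-A)^{-\alpha}h\|_{L^2}$, and for the Duhamel term use that $DF(X^x(s))$ is multiplication by $f'(X^x(s))$, which has $L^2$-operator norm bounded by $C(1+\|X^x(s)\|_{L^\infty}^q)$ by Assumption~\ref{ass:poly}; combining the already-established contraction for $\|\eta^h(s)\|_{L^2}$ with the uniform-in-time moment bounds of Proposition~\ref{propo:momentX} and a $\min(t-s,1)^{-\alpha}$-type singularity bounded via a generalized Gronwall / singular-kernel argument yields the stated estimate with the prefactor $(1+\|x\|_{L^\infty}^q)$ present exactly when $\alpha>0$. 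Then $Du(t,x).h = \E[D\varphi(X^x(t)).\eta^h(t)]$ together with boundedness of $D\varphi$ and the duality $|D\varphi(\cdot).\eta| \le \|D\varphi\|_\infty \|\eta\|_{L^2}$ (and a further split $t = t/2 + t/2$ using the Markov property to convert the $e^{-ct}$ decay to hold for all $t$, not just $t \le 1$) gives the first bound. For the second-derivative bound I would proceed analogously: estimate $\|\zeta^{h_1,h_2}(t)\|_{L^2}$ from its mild formulation, where the inhomogeneity $D^2F(X^x(s)).(\eta^{h_1}(s),\eta^{h_2}(s))$ is pointwise multiplication by $f''(X^x(s))$ applied to the product $\eta^{h_1}(s)\eta^{h_2}(s)$; bound $\|f''(X^x(s))\eta^{h_1}(s)\eta^{h_2}(s)\|_{L^1}$ (then use~\eqref{eq:ineqLinftyL1} to return to $L^2$ or $L^\infty$) by $C(1+\|X^x(s)\|_{L^\infty}^q)\|\eta^{h_1}(s)\|_{L^2}\|\eta^{h_2}(s)\|_{L^2}$ via Cauchy--Schwarz, insert the first-variation estimates with their $\min(s,1)^{-\beta_i}$ singularities, and check that $\beta_1+\beta_2<1$ makes the time integral convergent; the exponent $Q(\alpha_1,\alpha_2) = q + q\mathds{1}_{\alpha_1>0} + \mathds{1}_{\alpha_2>0}$ then simply counts one factor of $\|x\|_{L^\infty}^q$ from $f''$, one more if $\eta^{h_1}$ needs the singular (hence moment-costly) estimate, and a single power if $\eta^{h_2}$ does.

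The main obstacle I expect is the second-derivative estimate, specifically controlling the second variation $\zeta$: the inhomogeneous term couples two first-variation processes that may each carry a time singularity, so one must verify that the product of singularities $\int_0^t \min(t-s,1)^{-(\beta_1+\beta_2)}\min(s,1)^{-(\text{something})}\cdots ds$ remains integrable, which is exactly where the hypothesis $\beta_1+\beta_2<1$ is used, and one must be careful that the bookkeeping of polynomial-in-$\|x\|_{L^\infty}$ factors is tight enough to land on $Q(\alpha_1,\alpha_2)$ rather than something larger — in particular one should arrange to apply the singular, moment-costly bound to at most one of the two directions and the plain contraction bound to the other. A secondary technical point, already flagged in the paper, is that all these manipulations are only literally justified for the regularized equation (Galerkin truncation plus Lipschitz approximation of $F$ plus the $e^{\delta A}$ smoothing), so the variational processes genuinely exist; since all bounds are uniform in the regularization parameters, passing to the limit is routine and I would not dwell on it.
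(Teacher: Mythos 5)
Your overall architecture coincides with the paper's: represent $Du$ and $D^2u$ through the first and second variation processes, get the $\alpha=0$ case from the pathwise contraction $\|\eta^h(t)\|_{L^2}\le e^{-\gamma t}\|h\|_{L^2}$ furnished by the one-sided Lipschitz condition~\eqref{eq:condition}, gain the singularity $\min(t,1)^{-\alpha}$ from~\eqref{eq:expoAalpha}, and handle $\zeta^{h_1,h_2}$ by a Duhamel formula together with the duality $\|(-A)^{-\kappa}\cdot\|_{L^2}\le C_\kappa\|\cdot\|_{L^1}$, $\kappa\in(\tfrac14,1)$, Cauchy--Schwarz on the product $\eta^{h_1}\eta^{h_2}$, and the integrability condition $\beta_1+\beta_2<1$. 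Your bookkeeping of the powers of $\|x\|_{L^\infty}$ and your remark on the regularization are also consistent with the paper.

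There is, however, a concrete gap in your treatment of the case $\alpha>0$ for the first variation. You propose to use the mild formulation $\eta^h(t)=e^{tA}h+\int_0^t e^{(t-s)A}F'(X^x(s))\eta^h(s)\,ds$ and close the estimate by a ``generalized Gronwall / singular-kernel argument.'' The contraction bound you invoke controls $\|\eta^h(s)\|_{L^2}$ only by $\|h\|_{L^2}$, not by $\|(-A)^{-\alpha}h\|_{L^2}$, so it cannot be inserted directly; and if instead you close a genuine Gronwall loop, the coefficient $\|f'(X^x(s))\|_{L^\infty}\le C(1+\|X^x(s)\|_{L^\infty}^q)$ is random and unbounded, so the Gronwall factor is $\exp\bigl(C\int_0^t(1+\|X^x(s)\|_{L^\infty}^q)\,ds\bigr)$, whose expectation is not controlled by the polynomial moment bounds of Proposition~\ref{propo:momentX} and which in any case grows exponentially in $t$, destroying the uniform-in-time decay $e^{-ct}$ that the statement requires. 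The paper avoids this entirely: it introduces the perturbed propagator $\Theta(t,s)$ of the linearized equation, sets $\tilde{\Theta}(t,s)=\Theta(t,s)-e^{(t-s)A}$, and writes the Duhamel formula the other way around, $\tilde{\Theta}(t,s)h=\int_s^t\Theta(t,r)\bigl(F'(X^x(r))e^{(r-s)A}h\bigr)dr$, so that the pathwise contraction of $\Theta(t,r)$ applies to the integrand and the unbounded random coefficient appears only linearly inside a single integral, to be handled by taking moments at the end --- no Gronwall iteration at all. Equivalently, one performs the energy estimate directly on $\tilde\eta=\eta^h-e^{tA}h$. You should replace your Gronwall step by this decomposition; the same correction is then needed wherever the singular estimate on $\eta^{h_i}$ enters the second-derivative bound.
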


Proposition~\ref{propo:Kolmogorov} is a variant of existing results: see~\cite{C-E:14} and~\cite{C-E:17} for the Lipschitz case. See also~\cite{C-E:20} and~\cite{CuiHong:19} for the treatment of polynomial nonlinearities, for estimates with $t\le T$.

\begin{proof}[Proof of Proposition~\ref{propo:Kolmogorov}]
The first and second order derivatives of $u(t,\cdot)$ are expressed as
\begin{align*}
Du(t,x).h&=\E[D\varphi(X^x(t)).\eta^h(t)]\\
D^2u(t,x).(h_1,h_2)&=\E[D\varphi(X^x(t)).\zeta^{h_1,h_2}(t)]+\E[D^2\varphi(X^x(t)).(\eta^{h_1}(t),\eta^{h_2}(t))],
\end{align*}
where the processes $\bigl(\eta^h(t)\bigr)_{t\ge 0}$ and $\bigl(\zeta^{h_1,h_2}(t)\bigr)_{t\ge 0}$ are solutions of
\begin{align*}
\frac{d}{dt}\eta^h(t)&=\bigl(A+F'(X^x(t))\bigr)\eta^h(t)\\
\frac{d}{dt}\zeta^{h_1,h_2}(t)&=\bigl(A+F'(X^x(t))\bigr)\zeta^{h_1,h_2}(t)+F''(X^x(t))\eta^{h_1}(t)\eta^{h_2}(t),
\end{align*}
with initial conditions $\eta^h(0)=h$ and $\zeta^{h_1,h_2}(0)=0$.

In the computations below, the values of $C$ and $c$ may change from line to line.

Introduce the family of operators $\bigl(\Theta(t,s)\bigr)_{t\ge s\ge 0}$, such that for all $h\in L^2$
\[
\frac{d}{dt}\Theta(t,s)h=\bigl(A+F'(X^x(t))\bigr)\Theta(t,s)h~,\quad t\ge s~,\quad \Theta(s,s)h=h.
\]
Using the condition~\eqref{eq:condition}, one has
\[
\frac{1}{2}\frac{d\|\Theta(t,s)h\|_{L^2}^2}{dt}\le -\gamma \|\Theta(t,s)\|_{L^2}^2,
\]
thus $\|\Theta(t,s)h\|_{L^2}\le e^{-\gamma (t-s)}\|h\|_{L^2}$ for all $t\ge s\ge 0$. This yields the result for $\alpha=0$.

To treat the case $\alpha>0$, introduce the auxiliary operators $\tilde{\Theta}(t,s)=\Theta(t,s)-e^{(t-s)A}$ for all $t\ge s$ (see~\cite{C-E:20}). Then one has
\[
\frac{d}{dt}\tilde{\Theta}(t,s)h=\bigl(A+F'(X^x(t))\bigr)\tilde{\Theta}(t,s)h+F'(X^x(t)e^{(t-s)A}h,
\]
with $\tilde{\Theta}(s,s)h=0$. Applying a Duhamel type formula yields
\[
\tilde{\Theta}(t,s)h=\int_s^t \Theta(t,r)\bigl(F'(X^x(r))e^{(r-s)A}h\bigr)dr.
\]
Using the result when $\alpha=0$, the polynomial growth of $F'$ (Assumption~\ref{ass:poly}) and the inequality~\eqref{eq:expoAalpha}, one obtains
\begin{align*}
\|\tilde{\Theta}(t,s)h\|_{L^2}&\le \int_{s}^{t}e^{-\gamma(t-r)}\|F'(X^x(r)))\|_{L^\infty}\|e^{(r-s)A}h\|_{L^2}dr\\
&\le C\int_{s}^{t}e^{-\gamma(t-r)}(1+\|X^x(r)\|_{L^\infty}^q)e^{-c(r-s)A}(r-s)^{-\alpha}dr \|(-A)^{-\alpha}h\|_{L^2},
\end{align*}
where $c\in(0,\gamma)>0$. Using moment estimates (see Proposition~\ref{propo:momentX}) and Jensen's inequality, one obtains
\[
\bigl(\E[\tilde{\Theta}(t,s)h\|_{L^2}^m]\bigr)^{\frac1m}\le C e^{-c'(t-s)}(1+\|x_0\|_{L^\infty}^q)\|(-A)^{-\alpha}h\|_{L^2}.
\]
Since $\Theta(t,s)=\tilde{\Theta}(t,s)+e^{(t-s)A}$, one finally obtains
\[
\bigl(\E[\Theta(t,s)h\|_{L^2}^m]\bigr)^{\frac1m}\le C e^{-c(t-s)}\min(t-s,1)^{-\alpha}(1+\|x_0\|_{L^\infty}^q)\|(-A)^{-\alpha}h\|_{L^2}.
\]
Since $\eta^h(t)=\Theta(t,0)h$, this gives the result when $\alpha\in(0,1)$ for the first-order derivative.

It remains to deal with the second-order derivative. On the one hand, since $\varphi$ is of class $\mathcal{C}^2$ with bounded second-order derivative, applying Cauchy-Schwarz inequality and the result above yields
\begin{align*}
\big|\E[D^2\varphi(X^x(t)).(\eta^{h_1}(t),\eta^{h_2}(t)]&\le C \bigl(\E[\|\eta^{h_1}(t)\|_{L^2}^2]\bigr)^{\frac12}\bigl(\E[\eta^{h_2}(t)\|_{L^2}^2]\bigr)^{\frac12}\\
&\le C \bigl(\E[\|\Theta(t,0)h_1\|_{L^2}^2]\bigr)^{\frac12}\bigl(\E[\Theta(t,0)h_2\|_{L^2}^2]\bigr)^{\frac12}\\
&\le Ce^{-2ct}t^{-\alpha_1-\alpha_2}(1+\|x_0\|_{L^\infty}^{q\mathds{1}_{\alpha_1>0}+\mathds{1}_{\alpha_2>0}})\|(-A)^{-\alpha_1}h_1\|_{L^2}\|(-A)^{-\alpha_2}h_2\|_{L^2}.
\end{align*}
On the other hand, a Duhamel type formula yields the equality
\[
\zeta^{h_1,h_2}(t)=\int_0^t \Theta(t,s)\bigl(F''(X^x(s))\eta^{h_1}(t)\eta^{h_2}(t)\bigr)ds.
\]
Let $\kappa\in(\frac14,1)$, such that inequality~\eqref{eq:sob} holds. By a duality argument, one has $\|(-A)^{-\kappa}\cdot\|_{L^2}\le C_\kappa\|\cdot\|_{L^1}$. Using the result above with a conditional expectation argument yields
\begin{align*}
\E[\|\zeta^{h_1,h_2}&(t)\|_{L^2}\le C\int_{0}^{t}\frac{e^{-c(t-s)}}{\min(t-s,1)^\kappa}(1+\|x_0\|_{L^\infty}^q)\bigl(\E[\|(-A)^{-\kappa}\bigl(F''(X^x(s))\eta^{h_1}(t)\eta^{h_2}(t)\bigr)\|_{L^2}^2]ds\\
&\le C\int_{0}^{t}\frac{e^{-c(t-s)}}{\min(t-s,1)^\kappa}(1+\|x_0\|_{L^\infty}^q)\bigl(\E\|\eta^{h_1}(t)\|_{L^2}^2]\E[\|\eta^{h_2}(t)\|_{L^2}^2]\bigr)^{\frac12}ds\\
&\le C(1+\|x_0\|_{L^q}^{Q(\alpha_1,\alpha_2)})\int_{0}^{t}\frac{e^{-c(t-s)}}{\min(t-s,1)^\kappa}\frac{e^{-2cs}}{\min(s,1)^{\alpha_1+\alpha_2}}ds\|(-A)^{-\alpha_1}h_1\|_{L^2}\|(-A)^{-\alpha_2}h_ 2\|_{L^2}\\
&\le C(1+\|x_0\|_{L^q}^{Q(\alpha_1,\alpha_2)})e^{-ct}\|(-A)^{-\alpha_1}h_1\|_{L^2}\|(-A)^{-\alpha_2}h_2\|_{L^2},
\end{align*}
using the condition $\alpha_1+\alpha_2$ to ensure integrability, where $Q(\alpha_1,\alpha_2)=q+q\mathds{1}_{\alpha_1>0}+\mathds{1}_{\alpha_2>0}$.

This gives
\[
|\E[D^2\varphi(X^x(t)).\zeta^{h_1,h_2}(t)]|\le C(1+\|x_0\|_{L^q}^{3q})e^{-ct}\|(-A)^{-\alpha_1}h_1\|_{L^2}\|(-A)^{-\alpha_2}h_2\|_{L^2}.
\]

Gathering the estimates then concludes the proof of Proposition~\ref{propo:Kolmogorov}.
\end{proof}

\subsection{Some useful regularity results}\label{sec:aux}

In this section, the objective is to state and prove some useful spatial and temporal regularity properties for $\tilde{Z}(t)$ defined by~\eqref{eq:Ztilde}, and for $\tilde{Y}(t)$ given by
\[
\tilde{Y}(t)=\int_{0}^{t}e^{(t-s)A}\frac{F(X_{\ell(s)})}{1+\Delta t\|F(X_{\ell(s)})\|_{L^2}}ds,
\]
which is such that $\tilde{X}(t)=e^{tA}x_0+\tilde{Y}(t)+\tilde{Z}(t)$.

In the sequel, let Assumption~\ref{ass:alpha} be satisfied, and let the parameter $\overline{\alpha}$ be defined by~\eqref{eq:alphabar}.

\begin{lemma}\label{lemma-aux1}
For every $\alpha\in[0,\overline{\alpha})$ and $m\in\N$, there exists $C_{\alpha,m}\in(0,\infty)$ such that
\[
\underset{n\ge 0}\sup~\E[\|(-A)^{\alpha}Z_n\|_{L^2}^m]\le C_{\alpha,m}
\]
and for all $n\ge 0$ and $t\in[t_n,t_{n+1}]$
\[
\bigl(\E[\|\tilde{Z}(t)-Z_n\|_{L^2}^m]\bigr)^{\frac1m}\le C_{\alpha,m}\Delta t^{\alpha}.
\]
\end{lemma}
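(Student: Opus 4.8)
The plan is to establish the two bounds in Lemma~\ref{lemma-aux1} by direct computation on the series representation of $Z_n$ and $\tilde Z(t)$, using Assumption~\ref{ass:alpha} (equivalently the definition~\eqref{eq:alphabar} of $\overline\alpha$) together with the smoothing estimate~\eqref{eq:expoAalpha} and the gap estimate~\eqref{gapL2}. Since both $Z_n$ and $\tilde Z(t)$ are Gaussian (linear functionals of the Wiener process), it suffices to control the $L^2(\Omega)$-norm, i.e. the case $m=2$, and then invoke equivalence of Gaussian moments (Kahane--Khintchine) to promote the bound to all $m\in\N$; the constant $C_{\alpha,m}$ then also depends on $m$ through this equivalence.

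\medskip

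For the first bound, I would unfold the recursion~\eqref{eq:Zn}: one has $Z_n=\sum_{k=0}^{n-1}e^{(n-k)\Delta t A}\Delta W_k^Q$, so by independence of the increments and the It\^o isometry,
\[
\E[\|(-A)^\alpha Z_n\|_{L^2}^2]=\sum_{k=0}^{n-1}\sum_{j\in\N}q_j\int_{t_k}^{t_{k+1}}\|(-A)^\alpha e^{(t_{n}-s+\Delta t)A}\tilde e_j\|_{L^2}^2\,ds.
\]
Writing $\|(-A)^\alpha e^{\tau A}\tilde e_j\|_{L^2}^2=\sum_{\ell}\lambda_\ell^{2\alpha}e^{-2\lambda_\ell\tau}\langle\tilde e_j,e_\ell\rangle^2$, I would split $\lambda_\ell^{2\alpha}e^{-2\lambda_\ell\tau}=\lambda_\ell^{1}e^{-2\lambda_\ell\tau}\cdot\lambda_\ell^{2\alpha-1}$, bound $\lambda_\ell e^{-2\lambda_\ell\tau}\le C e^{-\lambda_\ell\tau}\tau^{-1}$ (i.e. apply~\eqref{eq:expoAalpha} with exponent $1/2$ after an extra $e^{-c\lambda_\ell\tau}$ factor is extracted for decay), and then integrate the resulting $\int_0^{t_n}\tau^{-1}e^{-c\lambda_1\tau}\,d\tau$-type integral — actually it is cleaner to keep one factor $e^{-c\lambda_1\tau}$ for the time-uniformity and use $\lambda_\ell^{2\alpha-1}e^{-c'\lambda_\ell\tau}\le C_\alpha \tau^{-(2\alpha-1+\epsilon)}$ is NOT needed; rather, summing in $\ell$ first gives $\sum_\ell\lambda_\ell^{2\alpha-1}e^{-2\lambda_\ell\tau}\langle\tilde e_j,e_\ell\rangle^2\le \|(-A)^{2\alpha-1}\tilde e_j\|_{L^2}^2$ uniformly in $\tau$ up to a constant, which combined with $\sum_j q_j\|(-A)^{2\alpha-1}\tilde e_j\|_{L^2}^2<\infty$ (finite since $\alpha<\overline\alpha$, by~\eqref{eq:alphabar}) and the convergent time integral $\int_0^\infty e^{-2\lambda_1\tau}\,d\tau<\infty$ yields the uniform-in-$n$ bound. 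One must only be slightly careful to absorb the $\Delta t$ shift in the exponent, which only helps.

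\medskip

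For the second (temporal regularity) bound, fix $n$ and $t\in[t_n,t_{n+1}]$. From~\eqref{eq:Ztilde} one has $\tilde Z(t)=e^{(t-t_n)A}Z_n+\int_{t_n}^t e^{(t-s)A}\,dW^Q(s)$, so
\[
\tilde Z(t)-Z_n=\bigl(e^{(t-t_n)A}-I\bigr)Z_n+\int_{t_n}^t e^{(t-s)A}\,dW^Q(s).
\]
The stochastic-integral term has $L^2(\Omega)$-norm squared equal to $\sum_j q_j\int_{t_n}^t\|e^{(t-s)A}\tilde e_j\|_{L^2}^2\,ds\le \sum_j q_j\int_0^{\Delta t}\|e^{\tau A}\tilde e_j\|^2\,d\tau$; bounding $\|e^{\tau A}\tilde e_j\|_{L^2}^2\le C\tau^{-(1-2\alpha)}\|(-A)^{2\alpha-1}\tilde e_j\|_{L^2}^2$ by~\eqref{eq:expoAalpha} and integrating gives $\le C_\alpha\Delta t^{2\alpha}\sum_j q_j\|(-A)^{2\alpha-1}\tilde e_j\|^2$, finite for $\alpha<\overline\alpha$. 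The first term is handled by~\eqref{eq:tempA}: $\|(e^{(t-t_n)A}-I)Z_n\|_{L^2}\le C_\alpha(t-t_n)^\alpha\|(-A)^\alpha Z_n\|_{L^2}\le C_\alpha\Delta t^\alpha\|(-A)^\alpha Z_n\|_{L^2}$, and then the first part of the Lemma bounds $\E[\|(-A)^\alpha Z_n\|_{L^2}^2]$ uniformly in $n$. Taking square roots, summing the two contributions, and upgrading to general $m$ by Gaussian hypercontractivity completes the argument.

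\medskip

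The main obstacle is essentially bookkeeping rather than conceptual: one must choose the split of exponents $\lambda_\ell^{2\alpha}=\lambda_\ell^{2\alpha-1}\cdot\lambda_\ell$ (or similar) so that the piece carrying the singular power $\lambda_\ell^{2\alpha-1}$ is exactly the one controlled by Assumption~\ref{ass:alpha}, while the remaining factor produces an integrable time singularity $\tau^{-(1-2\alpha)}$ (integrable precisely because $2\alpha<1$) and enough exponential decay for uniformity in $n$. There is a minor subtlety that Assumption~\ref{ass:alpha} is stated at the threshold $\alpha$ but we work with $\alpha<\overline\alpha$, so for given $\alpha$ one picks $\alpha'\in(\alpha,\overline\alpha)$ with the series still convergent and trades the extra regularity $\alpha'-\alpha$ against the time integral; this is routine. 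No genuinely hard estimate is involved once the series manipulations are set up correctly.
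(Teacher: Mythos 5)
Your proposal is correct and follows essentially the same route as the paper: reduction to $m=2$ by Gaussianity, It\^o isometry with the semigroup smoothing traded against Assumption~\ref{ass:alpha} for the first bound, and the decomposition $\tilde Z(t)-Z_n=\bigl(e^{(t-t_n)A}-I\bigr)Z_n+\text{(noise increment)}$ handled via \eqref{eq:tempA} and the first bound for the second. One minor slip: by \eqref{eq:Ztilde} the noise increment is $e^{(t-t_n)A}\bigl(W^Q(t)-W^Q(t_n)\bigr)$ rather than $\int_{t_n}^{t}e^{(t-s)A}dW^Q(s)$, but the It\^o-isometry estimate you give applies verbatim to either expression, so the conclusion is unaffected.
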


\begin{proof}
It suffices to consider the case $m=2$ since $\tilde{Z}(t)$ is a Gaussian random variable with values in $L^2$ for all $t\ge 0$.

First, using It\^o's isometry formula and~\eqref{eq:expoAalpha}, one has
\begin{align*}
\E[\|(-A)^{\alpha}Z_n\|_{L^2}^2]&=\E[\|\sum_{\ell=0}^{n-1}(-A)^{\alpha}e^{(t_n-t_\ell)A}\Delta W_\ell^Q\|_{L^2}^2]\\
&=\Delta t\sum_{\ell=0}^{n-1}\sum_{j\in\N}q_j\|(-A)^{\alpha}e^{(t_n-t_\ell)A}\tilde{e}_j\|_{L^2}^2\\
&\le C_\epsilon \Delta t\sum_{\ell=0}^{n-1}\frac{e^{-c(t_n-t_\ell)}}{(t_n-t_{\ell})^{1-\epsilon}}\|(-A)^{\alpha+\frac{\epsilon}{2}-\frac12}\tilde{e}_j\|_{L^2}^2\\
&\le C_{\alpha,\epsilon}<\infty,
\end{align*}
if $\epsilon\in(0,\frac{\overline{\alpha}-\alpha}{2})$.

Second, using It\^o's isometry formula and the inequalities~\eqref{eq:tempA} and~\eqref{eq:expoAalpha}, one has for $t\in[t_n,t_{n+1}]$, 
\begin{align*}
\E[\|\tilde{Z}(t)-Z_n\|_{L^2}^2]&=\E[\|(e^{(t-t_n)A}-I)Z_n\|_{L^2}^2]+\E[\|e^{(t-t_n)A}(W^Q(t)-W^Q(t_n))\|_{L^2}^2]\\
&\le \Delta t^{2\alpha}\E[\|(-A)^\alpha Z_n\|_{L^2}^2]+\sum_{j\in\N}q_j(t-t_n)\|e^{(t-t_n)A}\tilde{e}_j\|_{L^2}^2\\
&\le C_\alpha\Delta t^{2\alpha}+(t-t_n)^{2\alpha}\sum_{j\in\N}q_j \|(t-t_n)^{\frac12-\alpha}(-A)^{\frac12-\alpha}e^{(t-t_n)A} (-A)^{\alpha-\frac12}\tilde{e}_j\|_{L^2}^2\\
&\le C_\alpha \Delta t^{2\alpha}.
\end{align*}
This concludes the proof of Lemma~\ref{lemma-aux1}.
\end{proof}

\begin{lemma}\label{lemma-aux2}
For every $\epsilon\in[0,1)$ and $m\in\N$, there exists  $C_{\epsilon,m}\in(0,\infty)$ and a polynomial function $\mathcal{P}_m:\R\to\R$ such that
\[
\underset{0\le n\Delta t\le N\Delta t}\sup~\bigl(\E[\|(-A)^{1-\epsilon}\tilde{Y}(t_n)\|_{L^2}^m]\bigr)^{\frac1m}\le C_{\alpha,m}(1+(N\Delta t)^q)\mathcal{P}_m(\|x_0\|_{L^\infty})
\]
and for all $n\ge 0$ and $t\in[t_n,t_{n+1}]$, with $n\le N$, one has
\[
\bigl(\E[\|\tilde{Y}(t)-\tilde{Y}(t_n)\|_{L^2}^m]\bigr)^{\frac1m}\le C_{\epsilon,m}\Delta t^{1-\epsilon}1+(N\Delta t)^q)\mathcal{P}_m(\|x_0\|_{L^\infty}).
\]
\end{lemma}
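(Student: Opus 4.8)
The plan is to obtain both estimates by combining the smoothing property of the analytic semigroup with the moment bounds of Theorem~\ref{theo:moment}. The only point requiring attention is to keep the dependence on the time horizon $T=N\Delta t$ at the order $(N\Delta t)^q$ and not higher, which is ensured by using the exponential decay of the semigroup.

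First I would record the auxiliary inequality: combining~\eqref{gapL2} and~\eqref{eq:expoAalpha} (write $e^{tA}=e^{tA/2}e^{tA/2}$, apply $(-A)^{1-\epsilon}$ to one factor and~\eqref{gapL2} to the other), there exist $c,C_\epsilon\in(0,\infty)$ such that $\|(-A)^{1-\epsilon}e^{tA}x\|_{L^2}\le C_\epsilon\min(t,1)^{-(1-\epsilon)}e^{-ct}\|x\|_{L^2}$ for all $t>0$ and $x\in L^2$; since $1-\epsilon<1$, the function $r\mapsto\min(r,1)^{-(1-\epsilon)}e^{-cr}$ is integrable on $(0,\infty)$. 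Using the polynomial growth of $F$ (Assumption~\ref{ass:poly}), which gives $\|F(x)\|_{L^2}\le C(1+\|x\|_{L^\infty}^q)$, together with the trivial bound $\frac{\|F(X_{\ell(s)})\|_{L^2}}{1+\Delta t\|F(X_{\ell(s)})\|_{L^2}}\le\|F(X_{\ell(s)})\|_{L^2}$, I would apply $(-A)^{1-\epsilon}$ inside the mild formula for $\tilde{Y}(t_n)$ and use Minkowski's integral inequality to get
\[
\bigl(\E[\|(-A)^{1-\epsilon}\tilde{Y}(t_n)\|_{L^2}^m]\bigr)^{\frac1m}\le C_\epsilon\int_0^{t_n}\frac{e^{-c(t_n-s)}}{\min(t_n-s,1)^{1-\epsilon}}\Bigl(1+\bigl(\E[\|X_{\ell(s)}\|_{L^\infty}^{qm}]\bigr)^{\frac1m}\Bigr)ds.
\]
Since $\ell(s)\le N$ for $s\le t_n\le N\Delta t$, Theorem~\ref{theo:moment} bounds $\bigl(\E[\|X_{\ell(s)}\|_{L^\infty}^{qm}]\bigr)^{\frac1m}$ by $\bigl((1+N\Delta t)\mathcal{P}_{qm}(\|x_0\|_{L^\infty})\bigr)^{q}$, uniformly in $\Delta t\in(0,\Delta t_0]$, and the time integral is bounded uniformly in $n$; this yields the first estimate, with a new polynomial $\mathcal{P}_m$ built from powers of $\mathcal{P}_{qm}$.

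For the temporal regularity, I would decompose, for $t\in[t_n,t_{n+1}]$ (so that $\ell(s)=n$ on $[t_n,t)$),
\[
\tilde{Y}(t)-\tilde{Y}(t_n)=\bigl(e^{(t-t_n)A}-I\bigr)\tilde{Y}(t_n)+\int_{t_n}^t e^{(t-s)A}\frac{F(X_n)}{1+\Delta t\|F(X_n)\|_{L^2}}ds.
\]
The first term is controlled by~\eqref{eq:tempA} with exponent $1-\epsilon$: $\|(e^{(t-t_n)A}-I)\tilde{Y}(t_n)\|_{L^2}\le C_\epsilon\Delta t^{1-\epsilon}\|(-A)^{1-\epsilon}\tilde{Y}(t_n)\|_{L^2}$, and one concludes by taking the $L^m(\Omega)$ norm and invoking the first part of the lemma. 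For the second term, since $\|e^{(t-s)A}\|_{L^2\to L^2}\le1$ and $\frac{\|F(X_n)\|_{L^2}}{1+\Delta t\|F(X_n)\|_{L^2}}\le\|F(X_n)\|_{L^2}\le C(1+\|X_n\|_{L^\infty}^q)$, its $L^2$ norm is at most $(t-t_n)\,C(1+\|X_n\|_{L^\infty}^q)\le \Delta t_0^{\epsilon}\,\Delta t^{1-\epsilon}\,C(1+\|X_n\|_{L^\infty}^q)$, using $t-t_n\le\Delta t=\Delta t^{\epsilon}\Delta t^{1-\epsilon}\le\Delta t_0^{\epsilon}\Delta t^{1-\epsilon}$; taking the $L^m(\Omega)$ norm and using Theorem~\ref{theo:moment} once more gives the claimed bound.

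The argument is routine given Theorem~\ref{theo:moment}; the only genuine points of attention are the two already flagged: obtaining the exponential factor $e^{-c(t_n-s)}$ in the semigroup estimate so that the time integral in the spatial bound converges and the final dependence on $T=N\Delta t$ remains polynomial of degree $q$ (rather than $q+1$), and trading one power of $\Delta t$ for $\Delta t^{1-\epsilon}$ uniformly in $\Delta t\le\Delta t_0$ in the temporal estimate.
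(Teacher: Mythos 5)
Your proof is correct and follows essentially the same route as the paper: the mild formulation combined with the smoothing estimate $\|(-A)^{1-\epsilon}e^{tA}\|\le C\min(t,1)^{-(1-\epsilon)}e^{-ct}$ and the moment bounds of Theorem~\ref{theo:moment} for the spatial regularity, and the decomposition $\tilde{Y}(t)-\tilde{Y}(t_n)=(e^{(t-t_n)A}-I)\tilde{Y}(t_n)+\int_{t_n}^te^{(t-s)A}\frac{F(X_n)}{1+\Delta t\|F(X_n)\|_{L^2}}ds$ together with~\eqref{eq:tempA} for the temporal increment. The two points you flag (securing the exponential factor so the time integral is uniformly bounded, and trading $\Delta t$ for $\Delta t^{1-\epsilon}$) are exactly the ones the paper relies on, albeit implicitly.
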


\begin{proof}
First, using~\eqref{eq:expoAalpha} and~\eqref{eq:theo-moment_Xn} for all $0\le t\le T$, one has
\begin{align*}
\bigl(\E\|(-A)^{1-\epsilon}\tilde{Y}(t)\|_{L^2}^{m}\bigr)^{\frac1m}&\le C\int_{0}^{t}\frac{e^{-c(t-s)}}{(t-s)^{1-\epsilon}}\bigl(\E[\|F(X_{\ell(s)})\|_{L^2}^m]\bigr)^{\frac1m}ds\\
&\le C\int_0^\infty\frac{e^{-cs}}{s^{1-\epsilon}}ds(1+T^q)\mathcal{P}_m(\|x_0\|_{L^\infty}).
\end{align*}
Second, for $0\le t_n\le t\le t_N$, one has
\begin{align*}
\bigl(\E[\|\tilde{Y}(t)-\tilde{Y}(t_n)\|_{L^2}^m]\bigr)^{\frac1m}&\le \bigl(\E[\|(e^{(t-t_n)A}-I)\tilde{Y}(t_n)\|_{L^2}^m]\bigr)^{\frac1m}\\
&~+\bigl(\E[\|\int_{t_n}^{t}e^{(t-s)A}\frac{F(X_n)}{1+\Delta t\|F(X_n)\|_{L^2}}ds\|_{L^2}^m]\bigr)^{\frac1m}ds\\
&\le C\Delta t^{1-\epsilon}\bigl(\E[\|(-A)^{1-\epsilon}\tilde{Y}(t_n)\|_{L^2}^m]\bigr)^{\frac1m}+\Delta t\bigl(\E[\|F(X_n)\|_{L^\infty}^m]\bigr)^{\frac1m}\\
&\le C\Delta t^{1-\epsilon}(1+T^q)\mathcal{P}_m(\|x_0\|_{L^\infty}).
\end{align*}
This concludes the proof of Lemma~\ref{lemma-aux2}.
\end{proof}

\subsection{Weak error analysis}\label{sec:weak}

We are now in position to study the weak error and prove Theorem~\ref{theo:error}.

The weak error is written as follows:
\begin{align*}
\E[\varphi(X_N)]-\E[\varphi(X(t_N))]&=\E[u(0,X_N)]-\E[u(t_N,X_0)]\\
&=\sum_{n=0}^{N-1}\bigl(\E[u(t_N-t_{n+1},X_{n+1})]-\E[u(t_N-t_n,X_{n})]\bigr)\\
&=\sum_{n=0}^{N-1}\bigl(\E[u(t_N-t_{n+1},\tilde{X}(t_{n+1}))]-\E[u(t_N-t_{n},\tilde{X}(t_n)]\bigr)\\
&=\sum_{n=0}^{N-1}\int_{t_n}^{t_{n+1}}\E\bigl[(-\partial_t+\mathcal{L}_n)u(t_N-t,\tilde{X}(t))\bigr]dt,
\end{align*}
using It\^o's formula, where for all $n\in\N$ the auxiliary operator $\mathcal{L}_n$ is defined by
\[
\mathcal{L}_n\phi=D\phi(x).(Ax+\frac{F(X_n)}{1+\Delta t\|F(X_n)\|_{L^2}})+\frac12\sum_{j\in\N}q_j D^2\phi(x).\bigl(e^{\Delta tA}\tilde{e}_j,e^{\Delta tA}\tilde{e}_j\bigr).
\]
Using the fact that $u$ is solution of the Kolmogorov equation~\eqref{eq:Kolmo}, one has
\[
\E[\varphi(X_N)]-\E[\varphi(X(t_N))]=\epsilon_N^1+\epsilon_N^2,
\]
where
\begin{align*}
\epsilon_N^1&=\int_{0}^{t_N}\E[Du(t_N-t,\tilde{X}(t)).\Bigl(\frac{F(X_{\ell(t)})}{1+\Delta t\|F(X_{\ell(t)})\|_{L^2}}-F(\tilde{X}(t))\Bigr)]dt\\
\epsilon_N^2&=\int_{0}^{t_N}\frac12\sum_{j\in\N}q_j\E[D^2u(t_N-t,\tilde{X}(t)).(e^{\Delta tA}\tilde{e}_j,e^{\Delta tA}\tilde{e}_j)-D^2u(t_N-t,\tilde{X}(t)).(\tilde{e}_j,\tilde{e}_j)]dt.
\end{align*}

Theorem~\ref{theo:error} is a straightforward consequence of Lemma~\ref{lem-error1} and Lemma~\ref{lem-error2} stated below.

\begin{lemma}\label{lem-error1}
There exists a polynomial function $\mathcal{P}:\R\to\R$, and for every $\alpha\in(0,\overline{\alpha})$ there exists $C_\alpha\in(0,\infty)$, such that for all $x_0\in L^\infty$, all $\Delta t\in(0,\Delta t_0]$ and $N\in\N$, one has
\[
|\epsilon_N^{2}|\le C_\alpha\Delta t^{2\alpha}\bigl(1+(N\Delta t)^{2q^2+2q}\bigr)\mathcal{P}(\|x_0\|_{L^\infty}).
\]
\end{lemma}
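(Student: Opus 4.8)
The plan is to bound $\epsilon_N^2$ by exploiting the fact that it is the error in the covariance term coming from the extra factor $e^{\Delta tA}$ applied to the noise modes $\tilde e_j$. First I would rewrite the integrand using bilinearity of $D^2u$: writing $e^{\Delta tA}\tilde e_j=\tilde e_j+(e^{\Delta tA}-I)\tilde e_j$, one has
\[
D^2u(s,x).(e^{\Delta tA}\tilde e_j,e^{\Delta tA}\tilde e_j)-D^2u(s,x).(\tilde e_j,\tilde e_j)=D^2u(s,x).((e^{\Delta tA}-I)\tilde e_j,e^{\Delta tA}\tilde e_j)+D^2u(s,x).(\tilde e_j,(e^{\Delta tA}-I)\tilde e_j),
\]
so that it suffices to estimate two symmetric terms of the form $\sum_j q_j D^2u(t_N-t,\tilde X(t)).(g_j^1,g_j^2)$, where one of $g_j^1,g_j^2$ is $(e^{\Delta tA}-I)\tilde e_j$ and the other is $e^{\Delta tA}\tilde e_j$ or $\tilde e_j$.

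Next I would apply the second-order estimate from Proposition~\ref{propo:Kolmogorov} with a suitable choice of exponents $\beta_1,\beta_2$. The key gain comes from the factor $(e^{\Delta tA}-I)\tilde e_j$: using~\eqref{eq:tempA} one has, for $\alpha\in(0,\overline\alpha)$ and small $\epsilon>0$,
\[
\|(-A)^{-\beta}(e^{\Delta tA}-I)\tilde e_j\|_{L^2}\le C\Delta t^{2\alpha}\|(-A)^{2\alpha-\beta}\tilde e_j\|_{L^2},
\]
for an appropriate $\beta$, which produces the desired $\Delta t^{2\alpha}$ rate; for the other factor one simply uses $\|(-A)^{-\beta'}e^{\Delta tA}\tilde e_j\|_{L^2}\le C\|(-A)^{-\beta'}\tilde e_j\|_{L^2}$ by~\eqref{eq:LinftyLinfty} (or boundedness of $e^{\Delta tA}$ on $L^2$). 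Choosing $\beta,\beta'$ with $\beta+\beta'<1$ and matching $2\alpha-\beta$ and $-\beta'$ to the exponent $2\alpha-1$ appearing in Assumption~\ref{ass:alpha}, the sum $\sum_j q_j\|(-A)^{2\alpha-1}\tilde e_j\|_{L^2}^2$ is finite by definition of $\overline\alpha$, so $\sum_j q_j\|(-A)^{-\beta}(e^{\Delta tA}-I)\tilde e_j\|_{L^2}\|(-A)^{-\beta'}\tilde e_j\|_{L^2}\le C\Delta t^{2\alpha}$ after a Cauchy–Schwarz over $j$. Here I may need to split into the cylindrical case ($\beta,\beta'$ both positive, giving the singularity $\min(s,1)^{-\beta-\beta'}$ which is integrable since $\beta+\beta'<1$) and the trace-class case.

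Then I would collect the time integral: the estimate from Proposition~\ref{propo:Kolmogorov} provides an exponential factor $e^{-c(t_N-t)}$ together with the integrable singularity $\min(t_N-t,1)^{-\beta_1-\beta_2}$ in the backward time $t_N-t$, so $\int_0^{t_N}e^{-c(t_N-t)}\min(t_N-t,1)^{-\beta_1-\beta_2}\,dt\le C$ uniformly in $N$. The polynomial growth in $N\Delta t$ enters only through the moment bound $\E[\|\tilde X(t)\|_{L^\infty}^{Q(\beta_1,\beta_2)}]$, which by Theorem~\ref{theo:moment} (specifically~\eqref{eq:theo-moment}) is bounded by $(1+(N\Delta t)^q)^{Q(\beta_1,\beta_2)}\mathcal P(\|x_0\|_{L^\infty})$; with $Q(\beta_1,\beta_2)\le 2q+q = $ at most a constant multiple of $q$, raising to that power and using Jensen gives the exponent $2q^2+2q$ claimed in the statement, up to adjusting $\mathcal P$. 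Putting everything together yields $|\epsilon_N^2|\le C_\alpha\Delta t^{2\alpha}(1+(N\Delta t)^{2q^2+2q})\mathcal P(\|x_0\|_{L^\infty})$.

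The main obstacle I anticipate is the bookkeeping of the fractional exponents: one must simultaneously (i) keep $\beta_1+\beta_2<1$ so the time singularity is integrable, (ii) arrange that the $\tilde e_j$-norms that appear are dominated by $\|(-A)^{2\alpha-1}\tilde e_j\|_{L^2}$ so that Assumption~\ref{ass:alpha} applies, and (iii) absorb the small parameter $\epsilon$ from~\eqref{eq:tempA} without losing the rate $2\alpha$ (this is why the statement quantifies over $\alpha<\overline\alpha$ rather than $\alpha=\overline\alpha$). Handling the two regimes $Q=I$ and $\mathrm{Tr}(Q)<\infty$ uniformly, and making sure the moment bound from Theorem~\ref{theo:moment} — which is stated for $\tilde X(t)$ in the $L^\infty$ norm — is invoked with a power that is genuinely independent of $N$ and $\Delta t$, are the points that require care but no new ideas.
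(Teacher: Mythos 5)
Your argument is correct for the quantity it treats, namely $\epsilon_N^2$, and it follows essentially the same route as the paper: the bilinear polarization of $D^2u.(e^{\Delta tA}\tilde e_j,e^{\Delta tA}\tilde e_j)-D^2u.(\tilde e_j,\tilde e_j)$ into terms each carrying one factor $(e^{\Delta tA}-I)\tilde e_j$, the application of Proposition~\ref{propo:Kolmogorov} with exponents summing to less than $1$ (the paper takes $\beta_1=\tfrac12+\alpha$, $\beta_2=\tfrac12-\alpha-\epsilon$), the extraction of $\Delta t^{2\alpha}$ from $(e^{\Delta tA}-I)$ via~\eqref{eq:tempA}, summability over $j$ from Assumption~\ref{ass:alpha} because $\alpha+\epsilon<\overline{\alpha}$, and a polynomial-in-$T$ factor coming only from the moment bound of Theorem~\ref{theo:moment}. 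Your bookkeeping of the moment exponent differs slightly from the paper's ($3q^2$ rather than $2q^2+2q$), but that is immaterial for the conclusion; no case split between $Q=I$ and trace-class noise is actually needed since the argument only uses~\eqref{eq:alphabar}.

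You should be aware, however, that the statement you were handed is Lemma~\ref{lem-error1}, whose display reads $|\epsilon_N^2|$ only because of a typo in the source: the decomposition $\E[\varphi(X_N)]-\E[\varphi(X(t_N))]=\epsilon_N^1+\epsilon_N^2$ and the two proofs that follow make clear that Lemma~\ref{lem-error1} is meant to bound $\epsilon_N^1$ (the drift/taming error), while the term you estimated is exactly the content of Lemma~\ref{lem-error2}. For $\epsilon_N^1$ your strategy does not suffice. The paper's proof requires a further decomposition into: the taming error $\epsilon_N^{1,1}$, controlled by the extra factor $\Delta t\|F(X_{\ell(t)})\|_{L^2}$; a quadratic term $\epsilon_N^{1,2}$, handled with the second-order derivative estimates of Proposition~\ref{propo:Kolmogorov} together with the temporal regularity of $\tilde X$ from Lemmas~\ref{lemma-aux1} and~\ref{lemma-aux2}; and the delicate term $\epsilon_N^{1,3}$, where a conditional-expectation argument combined with a second-order Taylor expansion of $F$ is needed to upgrade the strong rate $\alpha$ of $\tilde Z(t)-Z_n$ to the weak rate $2\alpha$, using $\E_n[\tilde Z(t)-Z_n]=(e^{(t-t_n)A}-I)Z_n$ and the product inequalities~\eqref{eq:ineqproduct1}--\eqref{eq:ineqproduct2}. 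None of these ideas appear in your proposal, so if the intended target is $\epsilon_N^1$, there is a genuine gap; if the target is taken literally as $\epsilon_N^2$, your proof is sound and coincides with the paper's.
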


\begin{lemma}\label{lem-error2}
There exists a polynomial function $\mathcal{P}:\R\to\R$, and for every $\alpha\in(0,\overline{\alpha})$ there exists $C_\alpha\in(0,\infty)$, such that for all $x_0\in L^\infty$, all $\Delta t\in(0,\Delta t_0]$ and $N\in\N$, one has
\[
|\epsilon_N^{2}|\le C_\alpha\Delta t^{2\alpha}\bigl(1+(N\Delta t)^{3q^2}\bigr)\mathcal{P}(\|x_0\|_{L^\infty}).
\]
\end{lemma}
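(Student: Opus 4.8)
The plan is to bound $\epsilon_N^2$, which measures the discrepancy between the covariance operators $e^{\Delta tA}Qe^{\Delta tA}$ and $Q$ in the second-order term of the Kolmogorov equation. First I would rewrite the integrand using the symmetry of the second-order derivative: for each $j$,
\[
D^2u(\tau,x).(e^{\Delta tA}\tilde e_j,e^{\Delta tA}\tilde e_j)-D^2u(\tau,x).(\tilde e_j,\tilde e_j)
=D^2u(\tau,x).\bigl((e^{\Delta tA}-I)\tilde e_j,(e^{\Delta tA}+I)\tilde e_j\bigr),
\]
so that one factor carries the smoothing gain $(e^{\Delta tA}-I)$ and hence a power of $\Delta t$. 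I would then invoke Proposition~\ref{propo:Kolmogorov} with exponents $\beta_1,\beta_2\in(0,1)$, $\beta_1+\beta_2<1$, chosen so that $\beta_1+\beta_2$ is slightly above $2\alpha$, to obtain
\[
|D^2u(\tau,x).(\cdot,\cdot)|\le C(1+\|x\|_{L^\infty}^{Q(\beta_1,\beta_2)})e^{-c\tau}\min(\tau,1)^{-\beta_1-\beta_2}\|(-A)^{-\beta_1}(e^{\Delta tA}-I)\tilde e_j\|_{L^2}\|(-A)^{-\beta_2}(e^{\Delta tA}+I)\tilde e_j\|_{L^2}.
\]

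Next I would estimate the two deterministic norms. For the first, split $(-A)^{-\beta_1}(e^{\Delta tA}-I)=(-A)^{-\beta_1-2\alpha+1}(e^{\Delta tA}-I)(-A)^{2\alpha-1}$ and use the elementary bound $\|(-A)^{-\sigma}(e^{\Delta tA}-I)\|_{\mathcal L(L^2)}\le C_\sigma\Delta t^{\sigma}$ valid for $\sigma\in[0,1]$ (a consequence of spectral calculus, or of~\eqref{eq:tempA}); this produces a factor $\Delta t^{1-\beta_1-2\alpha}$ times $\|(-A)^{2\alpha-1}\tilde e_j\|_{L^2}$. For the second norm I simply bound $\|(-A)^{-\beta_2}(e^{\Delta tA}+I)\tilde e_j\|_{L^2}\le C\|(-A)^{2\alpha-1}\tilde e_j\|_{L^2}$ after inserting $(-A)^{-\beta_2-2\alpha+1}(e^{\Delta tA}+I)$, which is a bounded operator on $L^2$ provided $\beta_2+2\alpha-1\le 0$; I would also keep an extra $\Delta t^{\theta}$ from this factor if needed. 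Summing over $j$, Assumption~\ref{ass:alpha} (equivalently the definition~\eqref{eq:alphabar} of $\overline\alpha$) gives $\sum_j q_j\|(-A)^{2\alpha-1}\tilde e_j\|_{L^2}^2<\infty$ for $\alpha<\overline\alpha$, so the sum over $j$ is finite. Choosing the exponents so that $1-\beta_1-2\alpha+($contribution from the second factor$)=2\alpha$ produces the claimed power $\Delta t^{2\alpha}$ while keeping $\beta_1+\beta_2<1$; this is possible precisely because $\alpha<\overline\alpha\le\frac12$.

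Then I would handle the time integral $\int_0^{t_N}e^{-c(t_N-t)}\min(t_N-t,1)^{-\beta_1-\beta_2}\,\E[(1+\|\tilde X(t)\|_{L^\infty}^{Q(\beta_1,\beta_2)})]\,dt$: the exponential kernel makes $\int_0^{t_N}e^{-c(t_N-t)}\min(t_N-t,1)^{-\beta_1-\beta_2}dt$ bounded uniformly in $N$ since $\beta_1+\beta_2<1$, and Theorem~\ref{theo:moment} (estimate~\eqref{eq:theo-moment}) controls the moment of $\|\tilde X(t)\|_{L^\infty}$ by $(1+T^q)\mathcal P(\|x_0\|_{L^\infty})$ with $T=N\Delta t$; since $Q(\beta_1,\beta_2)\le q+q+1\le 3q$ and the polynomial dependence compounds through the moment bound, one reaches an overall factor of order $(1+(N\Delta t)^{3q^2})\mathcal P(\|x_0\|_{L^\infty})$. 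Collecting everything yields the bound stated in the lemma. The main obstacle I anticipate is the bookkeeping of the exponents: one must simultaneously satisfy $\beta_1+\beta_2<1$ (for Proposition~\ref{propo:Kolmogorov} and for time-integrability), have enough room to extract the full power $\Delta t^{2\alpha}$ from the two $e^{\Delta tA}\mp I$ factors, and keep the shifts $\beta_i+2\alpha-1$ in the admissible range so that Assumption~\ref{ass:alpha} applies to $\|(-A)^{2\alpha-1}\tilde e_j\|_{L^2}$ — this forces the use of $\alpha$ strictly below $\overline\alpha$ and a small auxiliary $\epsilon$-room, and one has to check the inequalities are jointly solvable; the singularity analysis and the moment-bound substitution are then routine.
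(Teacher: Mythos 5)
Your overall architecture is exactly that of the paper's proof: exploit the symmetry of $D^2u$ to place a factor $e^{\Delta tA}-I$ in at least one slot, apply Proposition~\ref{propo:Kolmogorov} with exponents $\beta_1+\beta_2<1$, convert $e^{\Delta tA}-I$ into $\Delta t^{2\alpha}$ times a power of $-A$ via~\eqref{eq:tempA}, sum over $j$ using the regularity of the noise, bound the time integral by the integrable kernel $e^{-c(t_N-t)}\min(t_N-t,1)^{-\beta_1-\beta_2}$, and invoke Theorem~\ref{theo:moment} to produce the factor $(1+(N\Delta t)^{3q^2})$. (The paper writes $(a,a)-(b,b)=(a-b,a-b)+2(a-b,b)$ rather than your $(a-b,a+b)$; this difference is immaterial.) However, the exponent bookkeeping --- which you rightly single out as the crux --- does not close as you have set it up. First, $e^{\Delta tA}+I$ tends to $2I$, so no positive power of $\Delta t$ can be extracted from the second slot: the full $\Delta t^{2\alpha}$ must come from the single $(e^{\Delta tA}-I)$ factor. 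Second, reducing that factor to $\Delta t^{2\alpha}\|(-A)^{2\alpha-1}\tilde{e}_j\|_{L^2}$ means writing $(-A)^{-\beta_1}(e^{\Delta tA}-I)=(-A)^{-(\beta_1+2\alpha-1)}(e^{\Delta tA}-I)(-A)^{2\alpha-1}$ and requires $\beta_1+2\alpha-1\ge 2\alpha$, i.e.\ $\beta_1\ge 1$; and reducing the other slot to $\|(-A)^{2\alpha-1}\tilde{e}_j\|_{L^2}$ requires $(-A)^{-(\beta_2+2\alpha-1)}$ to be bounded, i.e.\ $\beta_2\ge 1-2\alpha$ (your condition $\beta_2+2\alpha-1\le 0$ has the wrong sign: it is the \emph{negative} powers of $-A$ that are bounded). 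Together these force $\beta_1+\beta_2\ge 2-2\alpha>1$, which is incompatible with Proposition~\ref{propo:Kolmogorov}. There is also a sign slip in the extracted rate: the first factor yields $\Delta t^{\beta_1+2\alpha-1}$, not $\Delta t^{1-\beta_1-2\alpha}$, and with $\beta_1<1$ this power falls strictly short of $2\alpha$.

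The repair is not to target $\|(-A)^{2\alpha-1}\tilde{e}_j\|_{L^2}$ in both slots but to balance the smoothing between them, which is what the paper does: take $\beta_1=\tfrac12+\alpha$ on the slot carrying $e^{\Delta tA}-I$ and $\beta_2=\tfrac12-\alpha-\epsilon$ on the other. After extracting $\Delta t^{2\alpha}$ one is left with $\|(-A)^{\alpha-\frac12}\tilde{e}_j\|_{L^2}\,\|(-A)^{\alpha+\epsilon-\frac12}\tilde{e}_j\|_{L^2}$, whose sum against $q_j$ converges for $\alpha+\epsilon<\overline{\alpha}$, while $\beta_1+\beta_2=1-\epsilon<1$ keeps both the hypothesis of Proposition~\ref{propo:Kolmogorov} and the time-integrability. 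With that choice the rest of your argument (the uniform-in-$N$ bound on the time integral and the substitution of the moment bound, giving $(1+T^{3q^2})\mathcal{P}(\|x_0\|_{L^\infty})$) goes through as you describe.
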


\begin{proof}[Proof of Theorem~\ref{theo:error}]
It suffices to write
\begin{align*}
|\E[\varphi(X_N)]-\int\varphi d\mu_\star|&\le |\E[\varphi(X_n)]-\E[\varphi(X(N\Delta t))]|+|\E[\varphi(X(N\Delta t))]-\int \varphi d\mu_\star|\\
&\le |\epsilon_N^1|+|\epsilon_N^2|+|u(N\Delta t,x_0)-\int\varphi d\mu_\star|,
\end{align*}
and to use Lemma~\ref{lem-error1}, Lemma~\ref{lem-error2} and the inequality~\ref{eq:cvexpo} to conclude the proof of Theorem~\ref{theo:error}.
\end{proof}

It remains to prove Lemma~\ref{lem-error1} and Lemma~\ref{lem-error2}. In the proofs, the notations $C$ and $\mathcal{P}$ is used for constants and polynomial functions respectively which may change from line to line. The dependence with respect to $T=t_N=N\Delta t$ is studied carefully.

\begin{proof}[Proof of Lemma~\ref{lem-error1}]
The error term $\epsilon_N^1$ is decomposed as follows:
\[
\epsilon_N^1=\epsilon_N^{1,1}+\epsilon_N^{1,2}+\epsilon_N^{1,3},
\]
where
\begin{align*}
\epsilon_N^{1,1}&=\int_{0}^{t_N}\E[-\Delta t\|F(X_{\ell(t)})\|_{L^2}Du(t_N-t,\tilde{X}(t)).F(X_{\ell(t)})]dt\\
\epsilon_N^{1,2}&=\int_{0}^{t_N}\E[\bigl(Du(t_N-t,\tilde{X}(t))-Du(t_N-t,X_{\ell(t)})\bigr).\bigl(F(X_{\ell(t)})-F(\tilde{X}(t))\bigr)]dt\\
\epsilon_N^{1,3}&=\int_{0}^{t_N}\E[Du(t_N-t,X_{\ell(t)}).\bigl(F(X_{\ell(t)})-F(\tilde{X}(t))\bigr)]dt.
\end{align*}

Using Proposition~\ref{propo:Kolmogorov} with $\alpha=0$ and Theorem~\ref{theo:moment}, for the first term one obtains
\begin{align*}
|\epsilon_{N}^{1,1}|&\le C\Delta t\int_{0}^{t_N}e^{-c(t_N-s)}\E[(1+\|\tilde{X}(t))\|_{L^\infty}^{q})(1+\|\tilde{X}(t_{\ell(t)})\|_{L^\infty}^{2q})]dt\\
&\le C\Delta t(1+t_N^{q^2+2q})\mathcal{P}(\|x_0\|_{L^\infty}).
\end{align*}

To study the second error term, using Proposition~\ref{propo:Kolmogorov} with $\alpha_1=\alpha_2=0$ and the polynomial growth of $F$, one obtains
\begin{align*}
|\epsilon_{N}^{1,2}|&\le C\int_{0}^{t_N}e^{-c(t_N-t)}\E\bigl[(1+\|\tilde{X}(t)\|_{L^\infty}^{2q}+\|\tilde{X}(t_{\ell(t)})\|_{L^\infty}^{2q})\|\tilde{X}(t)-\tilde{X}(t_{\ell(t)})\|_{L^2}^2\bigr]dt\\
&\le C\int_{0}^{t_N}e^{-c(t_N-t)}(1+\underset{0\le s\le t_N}\sup~\E[\|\tilde{X}(s)\|_{L^\infty}^{4q}])^{\frac12}\bigl(\E[\|\tilde{X}(t)-\tilde{X}(t_{\ell(t)})\|_{L^2}^4]\bigr)^{\frac12}dt.
\end{align*}
Recall that $\tilde{X}(t)=e^{tA}x_0+\tilde{Z}(t)+\tilde{Y}(t)$. Using the inequalities~\eqref{eq:tempA} and~\eqref{eq:expoAalpha}, one has
\[
\|e^{tA}x_0-e^{t_{\ell(t)}A}x_0\|_{L^2}\le C_\alpha\Delta t^{2\alpha}t_{\ell(t)}^{-2\alpha}\|x_0\|_{L^2}.
\]
for all $t\ge \Delta t$. Writing the integral for $t\in[0,t_N]$ as the sum as the integrals for $t\in[0,\Delta t]$ (which gives a contribution of size $\Delta t$) and $t\in[\Delta t,t_N]$, and using Lemma~\ref{lemma-aux1} and Lemma~\ref{lemma-aux2}, combined with the moment bounds from Theorem~\ref{theo:moment}, one obtains
\[
|\epsilon_N^{1,2}|\le C\Delta t^{2\alpha}(\mathcal{P}(\|x_0\|_{L^\infty})(1+\|(-A)^{2\alpha}x_0\|_{L^2}^2)(1+t_N^{2q^2+2q}).
\]
It remains to deal with the third error term $\epsilon_N^{1,3}$.

For every $n\ge 0$, let $\E_n=\E[\cdot|\mathcal{F}_{t_n}]$ denote the conditional expectation operator, where $\mathcal{F}_{t}=\sigma\bigl(W^Q(s),0\le s\le t\bigr)$. Set $\tilde{Y}_{x_0}(t)=e^{tA}x_0+\tilde{Y}(t)$, then one has $\tilde{X}(t)=\tilde{Y}_{x_0}(t)+\tilde{Z}(t)$. The error term $\epsilon_N^{1,3}$ is decomposed into two parts as follows:
\begin{align*}
\epsilon_N^{1,3}&=\int_{0}^{t_N}\E[Du(t_N-t,X_{\ell(t)}).\bigl(F(\tilde{Y}_{x_0}(t_{\ell(t)})+\tilde{Z}(t_{\ell(t)}))-F(\tilde{Y}_{x_0}(t)+\tilde{Z}(t))\bigr)]dt\\
&=\int_{0}^{t_N}\E[Du(t_N-t,X_{\ell(t)}).\bigl(F(\tilde{Y}_{x_0}(t_{\ell(t)})+\tilde{Z}(t))-F(\tilde{Y}_{x_0}(t)+\tilde{Z}(t))\bigr)]dt\\
&+\int_{0}^{t_N}\E[Du(t_N-t,X_{\ell(t)}).\bigl(F(\tilde{Y}_{x_0}(t_{\ell(t)})+\tilde{Z}(t_{\ell(t)}))-F(\tilde{Y}_{x_0}(t_{\ell(t)})+\tilde{Z}(t))\bigr)]dt\\
&=\epsilon_N^{1,3,1}+\epsilon_N^{1,3,2}.
\end{align*}
For the first term, using Proposition~\ref{propo:Kolmogorov} with $\alpha=0$, then moment bounds from Theorem~\ref{theo:moment}, and finally the inequalities~\ref{eq:tempA} and~\eqref{eq:expoAalpha} as above, and Lemma~\ref{lemma-aux2}, and one obtains
\begin{align*}
|\epsilon_N^{1,2,1}|&\le C\int_{0}^{t_N}e^{-c(t_N-t)}\E\bigl[\bigl(1+\|\tilde{Y}_{x_0}(t_{\ell(t)})\|_{L^\infty}^q+\|\tilde{Y}_{x_0}(t)\|_{L^\infty}^q+\|\tilde{Z}(t)\|_{L^\infty}^q\bigr)\|\tilde{Y}_{x_0}(t_{\ell(t)})-\tilde{Y}_{x_0}(t)\|_{L^2}\bigr]dt\\
&\le C\int_{0}^{t_N}e^{-c(t_N-t)}(1+T^{q^2})\mathcal{P}(\|x_0\|_{L^\infty})\bigl(\E[\|\tilde{Y}_{x_0}(t)-\tilde{Y}_{x_0}(t_{\ell(t)})\|_{L^2}^{2}]\bigr)^{\frac12}dt\\
&\le C(1+T^{q^2+q})\mathcal{P}(\|x_0\|_{L^\infty})\Delta t^{2\alpha}.
\end{align*}
The arguments for the second term $\epsilon_N^{1,3,2}$ are more involved. First, a conditional expectation argument gives
\begin{align*}
\epsilon_{N}^{1,2,2}&=\sum_{n=0}^{N-1}\int_{t_n}^{t_{n+1}}\E[Du(t_N-t,X_n).\bigl(F(\tilde{Y}_{x_0}(t_n)+Z_n)-F(\tilde{Y}_{x_0}(t_n)+\tilde{Z}(t))\bigr)]dt\\
&=\sum_{n=0}^{N-1}\int_{t_n}^{t_{n+1}}\E[Du(t_N-t,X_n).\bigl(F(\tilde{Y}_{x_0}(t_n)+Z_n)-\E_{n}[F(\tilde{Y}_{x_0}(t_n)+\tilde{Z}(t))]\bigr)]dt.
\end{align*}
A second-order Taylor expansion then yields
\begin{align*}
\E_{n}[F(\tilde{Y}_{x_0}(t_n)+\tilde{Z}(t))]-F(\tilde{Y}_{x_0}(t_n)+Z_n)&=F'(\tilde{Y}_{x_0}(t_n)).\E_n[\tilde{Z}(t)-Z_n]]+R_n\\
&=F'(\tilde{Y}_{x_0}(t_n)).\bigl((e^{\Delta tA}-I)Z_n\bigr)+R_n,
\end{align*}
with
\[
\|R_n\|_{L^1}\le C\bigl(1+\|\tilde{Y}_{x_0}(t_n)\|_{L^\infty}^q+\|Z_n\|_{L^\infty}^q+\|\tilde{Z}(t)\|_{L^\infty}^q\bigr)\|\tilde{Z}(t)-Z_n\|_{L^2}^2.
\]
Using Proposition~\ref{propo:Kolmogorov} and the inequality~\ref{eq:sob}, for $\kappa\in(\frac14,1)$, moment bounds from Theorem~\ref{theo:moment} and Assumption~\ref{ass:momentZtilde}, and Lemma~\ref{lemma-aux1}, one has
\begin{align*}
\big|\sum_{n=0}^{N-1}\int_{t_n}^{t_{n+1}}\E[Du(t_N-t,X_n).R_n]dt\big|&\le C\Delta t^{2\alpha}(1+T^{q^2})\mathcal{P}(\|x_0\|_{L^\infty}).
\end{align*}
To treat the last error term, combining the inequalities~\eqref{eq:ineqproduct1} and~\eqref{eq:ineqproduct2}, with polynomial growth of $F'$ (Assumption~\ref{ass:poly}) yields
\begin{align*}
\|(-A)^{-\alpha-2\epsilon}F'(\tilde{Y}_{x_0}(t_n))&.\bigl((e^{\Delta tA}-I)Z_n\bigr)\|_{L^1}\\
&\le C_{\alpha,\epsilon}(1+\|\tilde{Y}_{x_0}(t_n)\|_{L^\infty}^q)\|(-A)^{\alpha+\epsilon}\tilde{Y}_{x_0}(t_n)\|_{L^2}\|(-A)^{-\alpha}(e^{\Delta tA}-I)Z_n\|_{L^2}\\
&\le C_{\alpha,\epsilon}\Delta t^{2\alpha}(1+\|\tilde{Y}_{x_0}(t_n)\|_{L^\infty}^q)\|(-A)^{\alpha+\epsilon}\tilde{Y}_{x_0}(t_n)\|_{L^2}\|(-A)^{\alpha}Z_n\|_{L^2},
\end{align*}
where $\epsilon>0$, using the inequality~\eqref{eq:tempA} in the last step. Note that
\[
\|(-A)^{\alpha+\epsilon}\tilde{Y}_{x_0}(t_n)\|_{L^2}\le \|(-A)^{\alpha+\epsilon}\tilde{Y}(t_n)\|_{L^2}+C\|(-A)^{\alpha+\epsilon}e^{t_nA}x_0\|_{L^2}.
\]
As above, one uses the inequality~\eqref{eq:expoAalpha} to get $\|(-A)^{\alpha+\epsilon}e^{t_nA}x_0\|_{L^2}\le Ct_n^{-\alpha-\epsilon}\|x_0\|_{L^2}$ when $n\ge 1$, and a decomposition of the integral for $t\in[0,t_N]$ into integrals for $t\in[0,\Delta t]$ (which gives a contribution of size $\Delta t$) and $t\in[\Delta t,t_N]$. Using Proposition~\ref{propo:Kolmogorov} with $\alpha_1=\alpha+\kappa$ and $\alpha_2=0$, one then obtains
\begin{align*}
\big|&\sum_{n=0}^{N-1}\int_{t_n}^{t_{n+1}}\E[Du(t_N-t,X_n).F'(\tilde{Y}_{x_0}(t_n)).\bigl((e^{\Delta tA}-I)Z_n\bigr)]dt\big|\\
&\le C\Delta t^{2\alpha}\sum_{n=0}^{N-1}\int_{t_n}^{t_{n+1}}\frac{e^{-c(t_N-t)}}{(t_N-t)^{\alpha+\kappa+2\epsilon}}\E[(1+\|X_n\|_{L^\infty}^{2q})(1+\|\tilde{Y}_{x_0}(t_n)\|_{L^\infty}^q)\|(-A)^{\alpha+\epsilon}\tilde{Y}_{x_0}(t_n)\|_{L^2}\|(-A)^{\alpha}Z_n\|_{L^2}]dt\\
&\le C\Delta t^{2\alpha}(1+T^{q^2+2q})\mathcal{P}(\|x_0\|_{L^\infty}).
\end{align*}
owing to inequality~\eqref{eq:expoAalpha}, and to the moment bounds from Lemma~\ref{lemma-aux1} and~\ref{lemma-aux2} and Theorem~\ref{theo:moment}.

Finally, one has
\[
|\epsilon_N^{1,3,2}|\le C\Delta t^{2\alpha}(1+T^{q^2+2q})\mathcal{P}(\|x_0\|_{L^\infty}).
\]

Gathering the estimates for $\epsilon_N^{1,1}$, $\epsilon_N^{1,2}$ and $\epsilon_N^{1,3}$ then concludes the proof of Lemma~\ref{lem-error1}.
\end{proof}

\begin{proof}[Proof of Lemma~\ref{lem-error2}]
Using the symmetry of the bilinear operator $D^2u(T-t,\tilde{X}(t))$, one has
\[
\epsilon_N^2=\epsilon_N^{2,1}+\epsilon_N^{2,2},
\]
where
\begin{align*}
\epsilon_N^{2,1}&=\frac12\sum_{j\in\N}q_j\int_{0}^{t_N}\E[D^2u(T-t,\tilde{X}(t)).\bigl((e^{\Delta tA}-I)\tilde{e}_j,(e^{\Delta tA}-I)\tilde{e}_j\bigr)]dt\\
\epsilon_N^{2,2}&=\sum_{j\in\N}q_j\int_{0}^{t_N}\E[D^2u(T-t,\tilde{X}(t)).\bigl((e^{\Delta tA}-I)\tilde{e}_j,\tilde{e}_j\bigr)]dt.
\end{align*}

Let $\alpha\in(0,\overline{\alpha})$, and $\epsilon>0$ such that $\alpha+\epsilon<\overline{\alpha}$. Below, Proposition~\ref{propo:Kolmogorov} is used with $\alpha_1=\frac12+\alpha$ and $\alpha_2=\frac12-\alpha-\epsilon$. In addition, Theorem~\ref{theo:moment} is also used to control moments.

Using the inequalities~\eqref{eq:expoAalpha} and~\eqref{eq:tempA}, one obtains, with $T=N\Delta t$,
\begin{align*}
|\epsilon_N^{2,1}|&\le C\sum_{j\in\N}q_j\int_{0}^{t_N}\frac{e^{-c(t_N-t)}}{(t_N-t)^{1-\epsilon}}\|(-A)^{-\frac12-\alpha}(e^{\Delta tA}-I)\tilde{e}_j\|_{L^2}\|(-A)^{\alpha-\frac12+\epsilon}\tilde{e}_j\|_{L^2}(1+\E\|\tilde{X}(t)\|_{L^\infty}^{3q})dt\\
&\le C(1+T^{3q^2})\mathcal{P}(\|x_0\|_{L^\infty})\sum_{j\in\N}q_j\|(-A)^{-\frac12+\alpha}(-A)^{-2\alpha}(e^{\Delta tA}-I)\tilde{e}_j\|_{L^2}\|(-A)^{-\frac12+\alpha+\epsilon}\tilde{e}_j\|_{L^2}\\
&\le C(1+T^{3q^2})\mathcal{P}(\|x_0\|_{L^\infty})\Delta t^{2\alpha}\sum_{j\in\N}q_j\|(-A)^{\alpha-\frac12+\epsilon}\tilde{e}_j\|_{L^2}^2\\
&\le C(1+T^{3q^2})\mathcal{P}(\|x_0\|_{L^\infty})\Delta t^{2\alpha},
\end{align*}
since $\alpha+\epsilon<\overline{\alpha}$.

The second term is treated similar arguments: indeed one has
\begin{align*}
|\epsilon_N^{2,2}|&\le C\sum_{j\in\N}q_j\int_{0}^{t_N}\frac{e^{-c(t_N-t)}}{(t_N-t)^{1-\epsilon}}\|(-A)^{-\frac12-\alpha}(e^{\Delta tA}-I)\tilde{e}_j\|_{L^2}\|(-A)^{-\frac12+\alpha+\epsilon}\tilde{e}_j\|_{L^2}(1+\E\|\tilde{X}(t)\|_{L^\infty}^{3q})dt,
\end{align*}
and proceeding as above one obtains
\[
|\epsilon_N^{2,2}|\le C(1+T^{3q^2})\mathcal{P}(\|x_0\|_{L^\infty})\Delta t^{2\alpha}.
\]

This concludes the proof of Lemma~\ref{lem-error2}.
\end{proof}

\section{Acknoledgments}

This work is partially supported by the SIMALIN project ANR-19-CE40-0016 of the French National Research Agency.


\end{document}